\newtheorem{thm}{Theorem}[section]
\newtheorem{cor}[thm]{Corollary}
\newtheorem{lem}[thm]{Lemma}
\newtheorem{prop}[thm]{Proposition}
\theoremstyle{definition}
\newtheorem{defn}[thm]{Definition}
\newtheorem{exm}[thm]{Example}
\newtheorem{rem}[thm]{Remark}
\numberwithin{equation}{section}
\DeclareMathOperator{\depth}{depth}
\DeclareMathOperator{\reg}{reg}
\DeclareMathOperator{\regst}{reg-stab}
\DeclareMathOperator{\conv}{conv}
\DeclareMathOperator{\gstab}{gstab}
\def\Nset{\mathbb {N}}
\def\Zset{\mathbb {Z}}
\def\Qset{\mathbb {Q}}
\def\Rset{\mathbb {R}}
\def\Inba {\overline{I^n}}
\def\albf {{\boldsymbol{\alpha}}}
\def\bbf {\mathbf b}
\def\ebf {\mathbf e}
\def\xbf {\mathbf x}
\def\Xbf {\mathbf X}
\def\Ybf {\mathbf Y}
\def\ubf {\mathbf u}
\def\vbf {\mathbf v}
\def\mfr {\mathfrak m}
\def\nfr {\mathfrak n}
\def\afr {\mathfrak a}
\def\Pcal{\mathcal P}
\def\Jcal{\mathcal J}
\begin{document}

\title[Maximal generating degrees] {Maximal generating degrees of  powers  of homogeneous ideals}
\author{Le Tuan Hoa }
\address{Institute of Mathematics, VAST, 18 Hoang Quoc Viet, 10307 Hanoi, Viet Nam}
\email{lthoa@math.ac.vn}
\subjclass{13F20, 13F55, 05E40, 90C10}
\keywords{monomial ideal, integral closure, maximal generating degrees }
\date{}
\dedicatory{Dedicated to Professor Nguyen Tu Cuong on the occasion of his 70th birthday.}
\commby{}
\begin{abstract}  The degree excess function $\epsilon(I;n)$ is the difference between the maximal generating degree $d(I^n)$ of a homogeneous ideal $I$ of a polynomial ring and $p(I)n$, where $p(I)$ is the leading coefficient of the asymptotically linear function  $d(I^n)$. It is shown that any non-increasing numerical function can be realized as a degree excess function, and there is a monomial ideal $I$ whose $\epsilon(I;n)$ has exactly a given number of local maxima. In the case of monomial ideals, an upper bound on $\epsilon(I;n)$ is provided. As an application it is shown that in the worst case, the  so-called stability index of the Castelnuovo-Mumford regularity of a monomial ideal $I$ must be at least an exponential function of the number of variables.
\end{abstract}

\maketitle
\section*{Introduction}

In this paper we study the maximal generating degree $d(I^n)$ of powers of a homogeneous ideal $I$ of a polynomial $R= K[X_1,...,X_s]$ of $s$ variables over a field $K$. It is a trivial fact that this function is bounded by $d(I)n$, but it is not obvious that $d(I^n) = p(I) n + c(I)$ for some $p(I)>0$ and $c(I) \geq 0$, when $n\gg 0$. This fact was established in \cite[Theorem 3.1]{CHT}. It is natural to ask how is the initial behavior of this function and when does this function become stable?  Some recent results are related to this kind of problems. A comparison on the numbers of generators $\mu(I)$ and $\mu(I^2)$ of $I$ and $I^2$, respectively, in \cite{HSZ} leads to the study of the initial behavior of the function $\mu(I^n)$. A somewhat surprising result says that given a number $q$ there is a monomial ideal $I$ with the property that the function which describes the number of generators of $I^n$ has at least $q$ local maxima, see \cite{AHZ}.  It is even more surprising, that a conjecture of Herzog and Hibi is recently proved in \cite{HNTT}, which states that the function $\depth (R/I^n)$  can be any convergent numerical function. The study of the maximal generating degree function $d(\Inba)$ of integral closures of a monomial ideal $I$ is carried out in \cite{H3}. The readers  may consult the survey paper \cite{H1} for other related problems and an extended list of references.

 Back to the maximal generating degrees, since  $d(I^n) \geq p(I) n$ by \cite[Proposition 4]{Ko}, it is equivalent to pose the above questions on the function $\epsilon(I; n) = d(I^n) - p(I) n$, which is called the degree excess function of $I$ in this paper.  Due to some obvious restrictions (see Lemma \ref{G1}), not any convergent numerical function can be realized as a degree excess function. For an example, when $n$ is bigger than the so-called reduction number $r(I)$ (see (\ref{RedD}) for the definition), this function must be  non-increasing. However, in this paper, we show that in general degree excess functions could be quite complicate. After showing that any non-increasing numerical function can be realized as a degree excess function (Theorem \ref{G6}), we show that there is an ideal that has exactly a given number of local maxima (Theorem \ref{G7}). Moreover these ideals can be taken to be monomial.
 
In the case of monomial ideals we also give an effective way to compute the number $p(I)$ mentioned above. It is the maximal length of vertices of the Newton polyhedron associated to $I$ (see Theorem \ref{Lead}(i)). This is achieved by using a technique from Linear Programming. Thank to it we are able to give  upper bounds for  $r(I)$ and $c(I)$ in terms of generating degrees and the number of generators, see Theorem \ref{Lead}. These bounds are close to be sharp (see Proposition \ref{Red}), where we also show that in the worst case, the stability index $\gstab(I)$ of maximal generating degree (see (\ref{gstabD})  for the exact definition) of a monomial ideal $I$ should be at least a function of $d(I)$ with the number of variables in the exponent.

Our study of degree excess functions leads to some interesting consequences on the Castelnuovo-Mumford regularities $\reg(I^n)$ of $I^n$. It was independently proved in \cite{CHT} and \cite{Ko}, that $\reg(I^n) = p(I)n + e(I)$ for some $e(I)\geq 0$ and all $n\gg 0$. It is an open problem to  bound  $e(I)$ and the so-called index of stability $\regst(I)$ (see (\ref{regstD}) for the definition), even in the case of monomial ideals, see, e.g. \cite{Ber, C, EU}. We show that even in the case of monomial ideals, in the worst case, the invariants $e(I)$ and $\regst(I)$ should be at least exponential functions of the number of variables (Theorem \ref{Reg}). This could be a reason why the study of $e(I)$ and $\regst(I)$ is so hard.

The paper has two sections. In Section \ref{A}, we construct some simple degree excess functions. Then, using a quite simple observation to combine them (Lemma \ref{G5}), we are able to construct monomial ideals satisfying Theorem \ref{G6} and Theorem \ref{G7}.  In Section \ref{B}, we first use a geometric object, the Newton polyhedron, to determine $p(I)$ and to give bounds on  $r(I)$  and on $\epsilon(I;n)$ (Theorem \ref{Lead}). Then we construct monomial ideals to show that these bounds are close to be sharp. We conclude the paper with the proof of Theorem \ref{Reg}.

\section{Degree excess function} \label{A}

The symbols $\Zset,\ \Nset,\ \Qset$, $\Rset$ and $ \Rset_+$ denote the sets of integers, non-negative integers, rationals, real numbers and non-negative real numbers, respectively. Let $R := K[X_1,...,X_s]$ be a polynomial ring over a field $K$ and $ \mfr := (X_1,...,X_s)$. Consider the standard grading in $R$, that is $\deg(X_i) = 1$ for all $i=1,...,r.$  Given a  vector $\albf \in \Nset^r$ we write $\Xbf^\albf := X_1^{\alpha_1}\cdots X_r^{\alpha_r}$. Then $\deg(\Xbf^\albf) = |\albf| = \alpha_1+ \cdots + \alpha_r$. Let $I \subset R $ be a homogeneous non-zero ideal, which is minimally generated by homogeneous polynomials $\{f_1,...,f_t\}$.  The number 
$$d(I) = \max\{\deg(f_1),..., \deg(f_t)\} $$
is called the maximal generating degree of $I$. 
\vskip0.3cm

\noindent {\bf Kodiyalam's construction}: Let 
$$p(I) := \max\{ \deg(f_i)|\ f_i^n \not\in \mfr I^n \ \text{for all } n\geq 1,\ i\leq t\},$$
and 
$$J = (f_i |\  \deg(f_i) \leq p(I), \ \ i\leq t) = (I_{\leq p(I)}),$$
where $I_{\leq q}$  denotes the set of all homogeneous elements of $I$ of degrees at most $q$. Then $J$ is a reduction of $I$, i.e., $I^n= JI^{n-1}$ for $n\gg 0$.

The following result of Cutkosky-Herzog-Trung and Kodiyalam is a starting point of our study.

\begin{lem} \label{CHTK} {\rm  (\cite[Corollary 3 and Proposition 4]{Ko} and \cite[Theorem 3.1]{CHT}).} Let $I$ be a non-zero proper homogeneous ideal of $R$. Then
\begin{itemize}
\item[(i)] There is $q(I) \geq 0$ such that 
\begin{equation}\label{EKo}
p(I)n \leq d(I^n) \leq p(I) n+ q(I) \ \text{for all } n\geq 1.
\end{equation}
\item[(ii)] There is $q_0(I) \geq 0$ such that $d(I^n) = p(I) n+ q_0(I) $ for all $n\gg 0$.
\end{itemize} 
\end{lem}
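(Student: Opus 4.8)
The plan is to derive both parts from Kodiyalam's construction together with two elementary ingredients: the inequality $d(AB)\le d(A)+d(B)$ for homogeneous ideals $A,B$, and the module-finiteness of the Rees algebra $\mathcal{R}(I)=\bigoplus_{n\ge 0}I^n$ over $\mathcal{R}(J)=\bigoplus_{n\ge 0}J^n$. Throughout I write $\epsilon(n):=d(I^n)-p(I)n$. For the lower bound in (i), I use the very definition of $p(I)$: I pick a generator $f_i$ with $\deg f_i=p(I)$ and $f_i^n\notin\mfr I^n$ for all $n$. Then $f_i^n$ may be taken as part of a minimal homogeneous generating set of $I^n$, so $d(I^n)\ge\deg(f_i^n)=p(I)n$; equivalently $\epsilon(n)\ge 0$ for all $n\ge 1$.

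For the upper bound in (i), since $J$ is a reduction of $I$ the algebra $\mathcal{R}(I)$ is a finitely generated $\mathcal{R}(J)$-module, so I may choose bihomogeneous module generators $g_1,\dots,g_m$ with $g_k\in I^{b_k}$ and $\deg g_k=a_k$. Taking Rees-degree-$n$ components gives $I^n=\sum_{b_k\le n}J^{\,n-b_k}g_k$ for every $n\ge 1$. Because $J$ is generated in degrees $\le p(I)$, one has $d(J^{m})\le p(I)m$, hence every element of $I^n$ produced this way has degree at most $p(I)(n-b_k)+a_k$, and consequently no minimal generator of $I^n$ can have larger degree. Thus $d(I^n)\le p(I)n+\max_k\bigl(a_k-p(I)b_k\bigr)$, and setting $q(I):=\max\{0,\ \max_k(a_k-p(I)b_k)\}\ge 0$ yields (i).

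For (ii) the key point is that $\epsilon(n)$ is eventually non-increasing. Let $r$ denote the reduction number, so that $I^{n+1}=JI^n$ for all $n\ge r$. Applying $d(AB)\le d(A)+d(B)$ and $d(J)\le p(I)$, I obtain $d(I^{n+1})=d(JI^n)\le d(J)+d(I^n)\le p(I)+d(I^n)$ for $n\ge r$, that is $\epsilon(n+1)\le\epsilon(n)$. Combined with the bound $\epsilon(n)\ge 0$ from (i), the integer-valued sequence $(\epsilon(n))_{n\ge r}$ is non-increasing and bounded below, hence eventually constant, equal to some $q_0(I)\ge 0$. This gives $d(I^n)=p(I)n+q_0(I)$ for $n\gg 0$, proving (ii).

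The genuinely routine facts here are the inequality $d(AB)\le d(A)+d(B)$ and the module-finiteness of $\mathcal{R}(I)$ over $\mathcal{R}(J)$. The one place deserving care is the well-definedness of $p(I)$, i.e.\ that the set defining it is non-empty; this is part of the cited Kodiyalam construction (it reflects that the fiber cone $F(I)=\mathcal{R}(I)/\mfr\mathcal{R}(I)$ has $\dim F(I)\ge 1$, so not every generator is nilpotent), and I take it as given. The main conceptual observation is that (ii) does not require the quantitative bound $q(I)$ of (i) at all: mere monotonicity of $\epsilon(n)$ for $n\ge r$ together with the trivial lower bound already forces stabilization.
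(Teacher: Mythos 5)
Your proposal is correct, but note that the paper itself gives no proof of this lemma: it is quoted verbatim from Kodiyalam \cite[Corollary 3 and Proposition 4]{Ko} and Cutkosky--Herzog--Trung \cite[Theorem 3.1]{CHT}, so the only honest comparison is with those sources and with the background the paper states. Your argument is essentially the standard one underlying the citations: the lower bound is exactly Kodiyalam's, using that some $f_i$ of degree $p(I)$ has $f_i^n\notin\mfr I^n$, so $f_i^n$ survives in $I^n/\mfr I^n$ and is a minimal generator of degree $p(I)n$; the upper bound via bihomogeneous module generators of $\mathcal{R}(I)$ over $\mathcal{R}(J)$ is the same finite-generation mechanism that drives the proofs in \cite{Ko} and \cite{CHT}; and your stabilization argument for (ii) --- monotonicity of $\epsilon(n)$ from $I^{n+1}=JI^n$ and $d(J)\le p(I)$, plus integrality and the lower bound $\epsilon(n)\ge 0$ --- is precisely the reasoning the paper itself reuses later in Lemma \ref{G1}(i). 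Two caveats, neither fatal. First, you correctly flag that you are assuming the two substantive inputs of Kodiyalam's construction (non-emptiness of the set defining $p(I)$, and that $J=(I_{\le p(I)})$ is in fact a reduction); since the paper states both without proof before the lemma, this is consistent with its framing, but be aware that the reduction property is itself a real theorem in \cite{Ko}, so your write-up is a derivation of the lemma \emph{from} the construction, not an independent proof of the cited results. Second, a wording slip: ``every element of $I^n$ produced this way has degree at most $p(I)(n-b_k)+a_k$'' is false as stated (ideals contain elements of arbitrarily large degree); what you mean, and what your conclusion uses, is that $I^n$ admits a homogeneous generating set consisting of products (minimal generators of $J^{n-b_k}$)$\cdot g_k$ of degrees at most $p(I)(n-b_k)+a_k$, which indeed bounds $d(I^n)$. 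Your closing observation that (ii) needs only monotonicity plus the trivial lower bound, not the quantitative $q(I)$ of (i), is accurate and is a genuine simplification relative to deducing (ii) from uniform boundedness.
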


Because of this lemma, we can make the following definition:

\begin{defn} We call $\epsilon (I; n) = d(I^n) - p(I)n \geq 0$ the {\it  degree excess function} of $I$. 
\end{defn}

If  $p(I) = d(I)$, then $\epsilon (I; n) =0$ for all $n$. However, in the general case, $\epsilon (I; n)$ could be quite complicate.

Recall that the reduction number of $I$ w.r.t. a reduction $J'$ is defined by
$$r_{J'}(I) = \min\{ n  \geq 0|\ I^{n+1} = J'I^{n} \}.$$
 In this paper we set
\begin{equation}\label{RedD}
r(I) := r_J(I),
\end{equation}
where $ J$ is the minimal reduction in the Kodiyalam's construction. 
Note that $I$ has many reductions. In the literature, $r(I)$ usually means the least number among the reduction numbers with respect to the so-called minimal reductions. This is not the case here, and $r(I)$ in the above definition  is in general much smaller than the usual one.

Below are some elementary properties of a degree excess function. 

\begin{lem} \label{G1} Let $J$ be as above and $r(I)$ the reduction number of $I$ w.r.t. $J$. Then
\begin{itemize}
\item[(i)] $\epsilon (I; n)$ is non-increasing for $n\geq r(I)$ and is a constant for $n\gg 0$.
\item[(ii)] $\epsilon (I; n)\leq \min\{n, r(I)\} (d(I) - p(I))$ for all $n\geq 1$.
\item[(iii)] For all $n\leq r(I)$,  $\epsilon (I; n) \geq n$.
\item[(iv)] For all $m\leq n$, $n\epsilon (I; m) \geq m\epsilon (I; n)$.
\end{itemize} 
\end{lem}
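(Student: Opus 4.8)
My plan rests on a single explicit description of $d(I^n)$ in terms of the generators, from which all four parts follow. Since $I^n$ is generated by the products $f^{\albf} := f_1^{\alpha_1}\cdots f_t^{\alpha_t}$ with $|\albf| = n$, and such a product has degree $\sum_i \alpha_i\deg(f_i)$, passing to $I^n/\mfr I^n$ gives
$$ d(I^n) = \max\Big\{ \textstyle\sum_i \alpha_i\deg(f_i) \ \Big|\ |\albf| = n,\ f^{\albf}\notin \mfr I^n \Big\}. $$
I would also record one elementary \emph{sub-monomial} fact: if $f^{\albf}\notin \mfr I^n$ and $\bebf\leq\albf$ componentwise with $|\bebf|=m$, then $f^{\bebf}\notin\mfr I^m$ (otherwise $f^{\albf}=f^{\bebf}f^{\albf-\bebf}\in\mfr I^m\cdot I^{n-m}=\mfr I^n$). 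These two observations drive everything.

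For (i) and (ii) the arguments are short. For $n\geq r(I)$ we have $I^{n+1}=JI^n$, so $d(I^{n+1})\leq d(J)+d(I^n)\leq p(I)+d(I^n)$ because $J$ is generated in degrees $\leq p(I)$; subtracting $p(I)(n+1)$ yields $\epsilon(I;n+1)\leq\epsilon(I;n)$, and the eventual constancy is Lemma \ref{CHTK}(ii). For (ii), the trivial bound $d(I^n)\leq n\,d(I)$ gives $\epsilon(I;n)\leq n(d(I)-p(I))$; combined with (i) this gives $\epsilon(I;n)\leq\epsilon(I;r(I))\leq r(I)(d(I)-p(I))$ once $n\geq r(I)$, so $\min\{n,r(I)\}(d(I)-p(I))$ is an upper bound in both ranges.

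The real content is in (iv) (equivalently, that $d(I^n)/n$ is non-increasing), and I expect this to be the main obstacle: the plain subadditivity $d(I^{m+n})\leq d(I^m)+d(I^n)$ is \emph{not} enough to force monotonicity of the slope. Instead I would argue with the displayed formula. Pick $\albf$ with $|\albf|=n$ and $f^{\albf}\notin\mfr I^n$ realizing $d(I^n)=\sum_i\alpha_i\deg(f_i)$. Viewing $\albf$ as a multiset of $n$ degrees (the value $\deg(f_i)$ taken $\alpha_i$ times), choose $\bebf\leq\albf$ with $|\bebf|=m$ consisting of the $m$ largest of these degrees; the sum of the top $m$ of $n$ numbers is at least $\frac{m}{n}$ of their total, so $\deg(f^{\bebf})\geq\frac{m}{n}d(I^n)$. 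By the sub-monomial fact $f^{\bebf}\notin\mfr I^m$, hence $d(I^m)\geq\deg(f^{\bebf})\geq\frac{m}{n}d(I^n)$, i.e. $n\,d(I^m)\geq m\,d(I^n)$; rewriting in terms of $\epsilon$ gives $n\epsilon(I;m)\geq m\epsilon(I;n)$.

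Finally (iii). For $1\leq n\leq r(I)$ the minimality in $r(I)=\min\{k:I^{k+1}=JI^k\}$ gives the strict inclusion $JI^{n-1}\subsetneq I^n$. I claim there is then a minimal generator of $I^n$ of the form $f^{\albf}$ with $\albf$ supported only on indices $i$ with $\deg(f_i)>p(I)$. Indeed, every product $f^{\albf}$ having a factor $f_i$ with $\deg(f_i)\leq p(I)$ already lies in $JI^{n-1}$; so if every $f^{\albf}$ supported on the high-degree indices lay in $\mfr I^n$, we would get $I^n=JI^{n-1}+\mfr I^n$, and graded Nakayama applied to $I^n/JI^{n-1}$ would force $I^n=JI^{n-1}$, contradicting the strict inclusion. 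For such an $\albf$ each $\deg(f_i)\geq p(I)+1$, so $d(I^n)\geq\deg(f^{\albf})\geq(p(I)+1)n$, that is $\epsilon(I;n)\geq n$. The only point to check carefully here is the index bookkeeping translating ``$n\leq r(I)$'' into the strict inclusion $JI^{n-1}\subsetneq I^n$, which is immediate from the definition of $r(I)$.
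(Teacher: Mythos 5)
Your proposal is correct and takes essentially the same route as the paper's proof: (i)--(ii) come from $I^{n+1}=JI^{n}$ beyond the reduction number together with the trivial bound $d(I^n)\leq nd(I)$, (iii) from producing a generator of $I^n$ that is a product of $n$ factors each of degree $>p(I)$, and (iv) from the top-$m$ averaging inequality combined with the fact that a sub-product of a product of generators lying outside $\mfr I^n$ lies outside $\mfr I^m$. Your reformulation via the identity $d(I^n)=\max\{\sum_i\alpha_i\deg(f_i):|\albf|=n,\ f^{\albf}\notin\mfr I^n\}$ merely makes explicit what the paper achieves by decomposing a minimal generator as $f=\sum_k f_{k1}\cdots f_{kn}g_k$ and selecting a suitable term, so no genuinely different idea is involved.
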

 
 \begin{proof} The fact that $\epsilon (I; n)$ is  a constant for $n\gg 0$ is a consequence of Lemma \ref{CHTK}(ii). The rest of (i) and (ii) follows from the equality $I^{n} = J^{n-r(I)}I^{r(I)}$ for all $n\geq r(I) + 1$ and $d(J) = p(I)$.
 
 (iii) Let $f\in I^n \setminus J I^{n-1}$ be a minimal generator of $I^n$. One can write 
 $f = \sum_k f_{k1}\cdots f_{k n} g_k,$ where $f_{ki} \in \{f_1,...,f_t\}$ for all $k,i$, $g_k$ is a homogeneous polynomial and $\deg(f_{k1} ) \geq \cdots \geq \deg(f_{kn} )$. Since $f\not\in J I^{n-1}$ there is $k$ such that $\deg(f_{k1} ), ...,  \deg(f_{kn} ) > p(I)$. Then 
 $$d(I^n) \geq \deg(f) \geq \deg(f_{k1} ) + \cdots +  \deg(f_{kn} ) \geq n(p(I) + 1),$$
 which implies $\epsilon (I; n) \geq n$.
 
(iv) Let $f\in I^n$ be a minimal generator such that $\deg(f) = d(I^n)$. As above we write
 $f = \sum_k f_{k1}\cdots f_{k n} g_k,$ where $f_{ki} \in \{f_1,...,f_t\}$ for all $k,i$, $g_k$ is a homogeneous polynomial and $\deg(f_{k1} ) \geq \cdots \geq \deg(f_{kn} )$.  Since $f$ is a minimal generator, there is $k$ such that $g_k=1$ and for all $m<n$, $f_{k1}\cdots f_{k m}$ is a minimal generator of $I^m$. Hence
 $$\begin{array}{ll}
 n[ p(I)m + \epsilon (I; m)] & = n d(I^m) \geq n[ \deg(f_{k1} ) + \cdots + \deg(f_{km} ) ] \\ & \geq m[  \deg(f_{k1} ) + \cdots + \deg(f_{kn} )] = m \deg(f) \\ & = m d(I^n)  = m[p(I)n + \epsilon (I; n) ],
\end{array}  $$
 which yields $n\epsilon (I; m)\geq m\epsilon (I; n)$.
\end{proof}

In order to understand the degree excess functions, the following questions/ problems are quite natural to ask for.
\vskip0.3cm

{\bf Problems}. 1. Characterize degree excess functions. In particular, how many local maxima can the function $\epsilon (I; n)$ have? 

2. When does $\epsilon (I; n)$ become a constant, or equivalently give a bound on the index of stability of the maximal generating degree defined by
\begin{equation}\label{gstabD}
\gstab (I) = \min\{n_0|\ \epsilon (I; n)= \epsilon (I; n_0) \ \ \text{for all}\ n\geq n_0\}.
\end{equation}

3. Give a bound on $r(I)$.
\vskip0.5cm

In the rest of paper we  give some partial solutions  to these problems.  Note that Vasconcelos already showed that for any homogeneous ideal of $R$ there is a bound on any reduction number, but this bound  is a  non-elementary function with four levels of exponentiation (see \cite[Proposition 2.3]{V}). We believe that for the reduction number $r(I)$ in (\ref{RedD}) the bound should be much smaller. However, even in the case of monomial ideals, the bound in the worst case should be at least an exponential function, see Proposition \ref{Red}.

In the sequel we mainly consider monomial ideals. Recall that  a monomial ideal of $R$ is an ideal generated by monomials. If $I$ is a monomial ideal, $I$ has the unique minimal generating system consisting of monomials which is denoted by $G(I)$. An element of $G(I)$ is called a {\it (monomial) minimal generator} of $I$.  

\vskip0.3cm
\noindent {\bf Notation.} We often consider the following special monomials in two variables:
$$m_i  = X_1^{p-i}X_2^i, \ \ \text{where}\ p\geq 4,\  0\leq i\leq p.$$
Of course, these monomials depend on the exponent $p$. However, this number is clearly given in the context, so that we omit it for the sake of simplicity.

\begin{rem} \label{EG2} Assume that a monomial ideal $\afr$ is a reduction of $I$ and $f\in R$ is a monomial such that $I=(\afr ,f)$. Then
$$r_{\afr}(I) = \min\{ t\geq 1|\ f^t \in \afr^t \} - 1.$$
\end{rem}

The following result shows that the bounds in Lemma \ref{G1} are sharp.

\begin{lem} \label{G2} Given $p\geq 4$ and $2 \leq a \leq p-2$ such that $p-1$ is not divisible by $a$. Let 
 $$J_{p,a} : = (m_0, m_p , m_{p-1} , X_1^{p-1} X_2^a) \subset K[X_1,X_2].$$
 Then   $ r(J_{p,a}) = r:= \lfloor \frac{p-1}{a}\rfloor$ and
 $$ \epsilon(J_{p,a}; n) = \begin{cases} 
  n(a-1) &  \ \text{if}\  1 \leq n  \leq r-1,\\
  r(a-1) & \ \text{if}\ n \geq r.
\end{cases}$$
\end{lem}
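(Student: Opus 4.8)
The plan is to compute the two invariants attached to the Kodiyalam construction, $p(J_{p,a})$ and $r(J_{p,a})$, and then to read $d(J_{p,a}^n)$ off the monomial combinatorics. Write $f:=X_1^{p-1}X_2^a$ and $\afr:=(m_0,m_p,m_{p-1})$, so $J_{p,a}=(\afr,f)$ and $\deg f = p-1+a > p = \deg m_0=\deg m_p=\deg m_{p-1}$. First I would argue $p(J_{p,a})=p$ and that $\afr$ is the Kodiyalam reduction $J$. Any nonzero element of $\mfr J_{p,a}^n$ has degree at least $pn+1$, whereas $m_0^n,m_p^n,m_{p-1}^n$ have degree exactly $pn$; hence these powers never lie in $\mfr J_{p,a}^n$, so each of $m_0,m_p,m_{p-1}$ contributes its degree $p$ to the maximum defining $p(J_{p,a})$. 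Equality $p(J_{p,a})=p$ (whence $J=\afr$) then follows once $f$ is seen not to contribute, i.e. once $f^n\in\mfr J_{p,a}^n$ for some $n$, which will come for free from the next step.

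Next I would compute $r(J_{p,a})=r_\afr(J_{p,a})$ via Remark \ref{EG2}, locating $\min\{t:f^t\in\afr^t\}$. As $\afr^t$ is generated by $m_0^{i}m_p^{j}m_{p-1}^{k}$ with $i+j+k=t$, the containment $f^t\in\afr^t$ says some such monomial divides $f^t=X_1^{(p-1)t}X_2^{at}$; comparing exponents and eliminating $i$ reduces this to the existence of $j,k\ge 0$ with $j+k\le t$ and $t\le pj+(p-1)k\le at$. The least positive value of $pj+(p-1)k$ is $p-1$ (at $j=0$, $k=1$), and $p-1\le at$ forces $t\ge(p-1)/a$, hence $t\ge\lceil(p-1)/a\rceil=\lfloor(p-1)/a\rfloor+1=r+1$, the middle equality being exactly where $a\nmid(p-1)$ is used. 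Conversely $t=r+1$ is admissible with $(i,j,k)=(r,0,1)$, and the same data give $f^{r+1}=h\,m_0^{r}m_{p-1}$ with $h=X_1^{p-r-2}X_2^{a(r+1)-(p-1)}$; the easy inequalities $r\le p-2$ and $a(r+1)\ge p-1$ make $h$ a monomial of positive degree, so $f^{r+1}\in\mfr J_{p,a}^{r+1}$. This simultaneously finishes $p(J_{p,a})=p$ and yields $r(J_{p,a})=r$.

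For the excess function I would use that every minimal generator of $J_{p,a}^n$ is a product of exactly $n$ of the four generators, hence has degree $pn+(a-1)i_f$ where $i_f$ is its multiplicity of $f$; therefore $\epsilon(J_{p,a};n)=(a-1)\max i_f$, the maximum over minimal generators, and it suffices to show this maximum equals $\min\{n,r\}$. The upper bound is the easy half: if $i_f\ge r+1$, substituting $f^{r+1}=h\,m_0^{r}m_{p-1}$ rewrites the product as $h$ times a product of $n$ generators, so it lies in $\mfr J_{p,a}^n$ and is not minimal; with $i_f\le n$ this gives $\max i_f\le\min\{n,r\}$. For the matching lower bound I would exhibit explicit minimal generators of $f$-multiplicity $\min\{n,r\}$, namely $f^n$ when $n\le r$ and $m_0^{\,n-r}f^{r}=X_1^{pn-r}X_2^{ar}$ when $n\ge r$.

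I expect the genuine minimality of these two candidates to be the main obstacle, and it is precisely where $a\le p-2$ and $a\nmid(p-1)$ are consumed. For the monomial ideal $J_{p,a}$, a product of $n$ generators fails to be a minimal generator exactly when another such product divides it; testing this yields linear inequalities on the amount $Y:=p\,i_p+(p-1)i_{p-1}$ that the $m_p,m_{p-1}$-factors of a hypothetical divisor contribute. The key elementary remark is that $Y$ is either $0$ or at least $p-1$, while the divisibility inequalities trap $Y$ in a window whose upper end is at most $ar$; since $ar=a\lfloor(p-1)/a\rfloor<p-1$ (equality would force $a\mid(p-1)$), this is the contradiction that rules out a proper divisor of $m_0^{\,n-r}f^{r}$, both after dividing by $X_1$ and after dividing by $X_2$. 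A parallel computation, organised through $s:=i_0+i_p+i_{p-1}$ (the number of non-$f$ factors of a divisor), first shows via the $X_1$-inequality that such a divisor must use some $m_p$ or $m_{p-1}$, whence $s\ge(p\,i_p+(p-1)i_{p-1})/a\ge(p-1)/a$; thus any proper divisor of $f^n$ would need $s\ge r+1>n$, impossible for $n\le r$, so $f^n$ is minimal there. Combining the two bounds gives $\max i_f=\min\{n,r\}$, which is the claimed piecewise-linear form of $\epsilon(J_{p,a};n)$.
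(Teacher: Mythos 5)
Your proposal is correct and takes essentially the same route as the paper's own proof: the same Kodiyalam reduction $(m_0,m_p,m_{p-1})$ with $p(J_{p,a})=p$, the same witness identity $f^{r+1}=h\,m_0^{r}m_{p-1}$ together with the same key obstruction $ar<p-1$ (exactly where $a\nmid(p-1)$ enters) to pin down $r(J_{p,a})=r$ via Remark \ref{EG2}, and the same extremal minimal generators $f^{\min(n,r)}m_0^{\max(n-r,0)}$ certified by the same $X_1$/$X_2$-exponent comparisons. Your bookkeeping through the $f$-multiplicity $i_f$ and the substitution trick bounding $i_f\leq r$ are only a cosmetic repackaging of the paper's use of $I^{n}=J^{n-r}I^{r}$ for the upper bound, so there is nothing substantively different to compare.
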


\begin{proof} 
(i) For short, we set $m= X_1^{p-1} X_2^a$,  $I= J_{p,a}$ and $J= (m_0,m_p,m_{p-1})$.  Then $J$ is the reduction of $I$ in the Kodiyalam's construction, and $p(I) = p$.

Note that $r+1 >  p/a$, whence $(r+1) a >  p$, or equivalently $(r+1)a - p >  0$. Since $a\geq 2$, $ p\ge r+1$. Hence 
$$\begin{array}{ll} 
m^{r+1}&  = X_1^{(r+1)(p-1)}  X_2^{(r+1)a} = X_1^{rp} X_2^p  X_1^{p-(r+1)} X_2^{(r+1)a-p}  \\
& = m_0 m_p^r  X_1^{p-(r+1)} X_2^{(r+1)a-p}  \in J^{r+1} .
\end{array}$$
 By Remark \ref{EG2}, $r(I) \leq  r$, and 
 \begin{equation}\label{EG20}
 d(I^n) \leq d(J^{n-r}) + d(I^r) \leq np + r(a-1) \ \text{for all}\  n\geq r. 
\end{equation}
 
(ii)  Further, assume that $m^t \in  J^t$ for some  $0< t\leq r$.  Then $m^t = m' m_0^{\alpha_0}m_p^{\alpha_p} m_{p-1}^{\alpha_{p-1}}$ for a monomial $m'$ and some non-negative integers $\alpha_i$ such that $\alpha_0 + \alpha_p + \alpha_{p-1} = t$. Looking at the exponents of $X_1$ on both sides, we get  $(p-1)t \geq p\alpha_0$, whence $\alpha_0 < t$. Hence $\alpha_p+\alpha_{p-1} \geq 1$. Looking at the exponents of $X_2$, we now obtain 
\begin{equation}\label{EG2b} at \geq p\alpha_p + (p-1)\alpha_{p-1} \geq (p-1)(\alpha_p+\alpha_{p-1})\geq p-1.
\end{equation}
Since $p-1$ is not divisible by $a$, $r < (p-1)/a$, whence $ar < p-1$. Hence $p-1> ar \geq at \geq p-1$, a contradiction. Thus,  
 \begin{equation}\label{EG21}
 m^t \not\in  J^{t}\  \text{for all} \ 0<t\leq r.
\end{equation}
 By Remark \ref{EG2} this implies $r(I) \geq  r $, which then yields $r(I) = r$.  

(iii)  Now let $t\leq r$ and $n\geq t$.  Assume that $m^t m_0^{n-t} \in \mfr I^n$. 
 Then one can write
 $$m^t m_0^{n-t} = m' m_0^{\alpha_0}m_p^{\alpha_p}m_{p-1}^{\alpha_{p-1}}m^\alpha,$$ 
 where $m'\neq 1$ is a monomial and $\alpha+ \alpha_0 + \alpha_p + \alpha_{p-1} = n$. Comparing the total degrees in both sides we must have $\alpha < t$. Therefore
  $$m^u m_0^{n-t} = m' m_0^{\alpha_0}m_p^{\alpha_p}m_{p-1}^{\alpha_{p-1}},$$ 
  where $1\leq u := t - \alpha \leq r$. By virtue of (\ref{EG21}), $\alpha_0 < n-t$. Set $v= n-t -\alpha_0>0$. Then
  $$m^u m_0^v = m' m_p^{\alpha_p}m_{p-1}^{\alpha_{p-1}},$$ 
  where $\alpha_p + \alpha_{p-1} = u+v$. Comparing the exponents of $X_2$ in both sides, we get 
  $au \geq \alpha_p p + \alpha_{p-1} (p-1)\geq (p-1)(\alpha_p + \alpha_{p-1})$. Since $au\leq ar < p-1$, we must have $\alpha_p = \alpha_{p-1} = 0$. Then $u+v = 0$, a contradiction. 
  
  Summing up, we have shown that  $m^t m_0^{n-t} $ is a minimal generator of $I^n $. Therefore $d(I^n) \geq t(p+a-1) + (n-t)p = np + t(a-1)$. Since $d(I^t) \leq t(p+a-1)$ for all $t$, together with (\ref{EG20}), this implies $d(I^t) = tp + t(a-1)$ for all $t\leq r$ and  $d(I^n)  = np + r(a-1)$ for all $n\geq r$, or equivalently $\epsilon(I; n) = n(a-1)$ for $n\leq r$ and $\epsilon(I;n) = r(a-1)$ for $n\geq r$.
\end{proof}

The following result is a folklore. We give a proof for the sake of completeness.

\begin{lem} \label{G3} For  given $p\geq 4$ and $2\leq i \leq \lfloor p/2\rfloor$, let
$$\begin{array}{ll}
I_p &= (m_0,m_p,m_1,m_{p-1}), \ \text{and} \\
I_{p,i,0} &= (m_0, m_p,m_1, m_{p-1}, m_i),
\end{array}$$
be two ideals of $K[X_1,X_2]$. Then
\begin{itemize}
\item[(i)] For all $n\geq p-2$,
$$(I_{p,i,0})^n = (I_p)^n = (\Xbf^{\albf}| \ |\albf| = pn).$$
\item[(ii)] For all $u\leq i-2$ and $v\leq p-i-2$ we have  $m_im_0^um_p^v \not\in (I_p)^{1+u+v}$. 
\item[(iii)] $m_i^t \not\in I_p^t$ for all $1\leq t < \frac{p-1}{i}$.
\end{itemize} 
\end{lem}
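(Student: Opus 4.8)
The plan is to reduce all three parts to a single combinatorial description of which $X_2$-exponents occur among products of generators. Since every generator of $I_p$ has degree $p$, the degree-$pn$ component of $(I_p)^n$ consists \emph{exactly} of the products of $n$ generators: any monomial of degree $pn$ lying in $(I_p)^n$ must equal such a product, because a proper monomial multiple of a degree-$pn$ product would have strictly larger degree. A product of $n$ generators has the shape $X_1^{pn-b}X_2^b$, where $b$ is a sum of $n$ values drawn from the multiset of $X_2$-degrees $S:=\{0,1,p-1,p\}$ of $m_0,m_1,m_{p-1},m_p$. Thus I reduce everything to understanding the set $nS$ of attainable $X_2$-exponents $b$.

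Next I would pin down $nS$ exactly. Writing a representation with $x_0,x_1,x_{p-1},x_p$ copies of the four values ($\sum x_j=n$) and setting $y:=x_{p-1}+x_p$, one has $b=x_1+(p-1)x_{p-1}+px_p=py+(x_1-x_{p-1})$; for fixed $y$ the quantity $x_1-x_{p-1}$ sweeps out \emph{every} integer in $[-y,\,n-y]$, since one may trade a $p-1$ for a $1$ one unit at a time. Hence $b\in nS$ iff $(p-1)y\le b\le (p-1)y+n$ for some $0\le y\le n$, i.e.
\[
 nS \;=\; \bigcup_{y=0}^{n}\big([(p-1)y,\,(p-1)y+n]\cap\Zset\big),
\]
a union of integer intervals of length $n$ whose left endpoints are the multiples $0,\,p-1,\,\ldots,\,(p-1)n$. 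This interval description is the only real content; once it is in hand, all three parts are short endpoint comparisons, and the subtle point to verify carefully is precisely that for fixed $y$ the attainable exponents form a gap-free integer interval.

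For \textbf{part (i)}, consecutive intervals meet (leaving no gap) exactly when $(p-1)(y+1)\le (p-1)y+n+1$, i.e. when $n\ge p-2$; since the top interval reaches $(p-1)n+n=pn$, for $n\ge p-2$ we get $nS=\{0,1,\ldots,pn\}$, so $(I_p)^n$ contains every monomial of degree $pn$, giving $(I_p)^n\supseteq(\Xbf^{\albf}\mid |\albf|=pn)$. The reverse inclusions $(I_p)^n\subseteq (I_{p,i,0})^n\subseteq(\Xbf^{\albf}\mid |\albf|=pn)$ are immediate since $I_p\subseteq I_{p,i,0}$ and all generators have degree $p$; chaining these forces equality throughout.

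For \textbf{part (ii)}, the monomial $m_im_0^um_p^v$ has degree $pN$ with $N=1+u+v$ and $X_2$-exponent $b=i+pv$. I would check that $b$ falls strictly between the intervals $y=v$ and $y=v+1$: the inequality $b>(p-1)v+N$ reduces to $u<i-1$ (true, as $u\le i-2$), and $b<(p-1)(v+1)$ reduces to $v<p-1-i$ (true, as $v\le p-i-2$); intervals with $y\le v-1$ or $y\ge v+2$ are even farther from $b$, so $b\notin NS$ and $m_im_0^um_p^v\notin(I_p)^N$. For \textbf{part (iii)}, $m_i^t$ has $X_2$-exponent $b=it$, and $it\in[(p-1)y,(p-1)y+t]$ forces $\tfrac{(i-1)t}{p-1}\le y\le \tfrac{it}{p-1}$; the hypothesis $t<\tfrac{p-1}{i}$ gives $\tfrac{it}{p-1}<1$ while $i\ge 2$ gives $\tfrac{(i-1)t}{p-1}>0$, so this range contains no integer $y$. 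Hence $it\notin tS$ and $m_i^t\notin(I_p)^t$.
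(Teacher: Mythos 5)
Your proof is correct, but it is organized quite differently from the paper's. The paper proves the three parts by three separate ad hoc arguments: for (i) it exhibits explicit factorizations of every monomial of degree $p(p-2)$ (case analysis on $\alpha_1 = ap+j$ according to whether $j\leq p-a-2$), then extends to $n>p-2$ by induction via $(m_0,m_p)(I_p)^{n-1}$; for (ii) it compares the $X_1$- and $X_2$-exponents in a putative factorization and derives contradictions in two cases ($\alpha_1\geq\alpha_{p-1}$ and $\alpha_1<\alpha_{p-1}$); for (iii) it refers to the degree-comparison argument of Lemma \ref{G2}. You instead prove one structural fact — since all generators have degree $p$, the degree-$pn$ part of $(I_p)^n$ is exactly the set of $n$-fold products of generators, and the attainable $X_2$-exponents form the union $\bigcup_{y=0}^{n}\left[(p-1)y,\,(p-1)y+n\right]\cap\Zset$ — and then all three parts become endpoint comparisons. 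This buys a cleaner and more informative statement (it locates exactly where the gaps in the exponent set are, and it handles all $n\geq p-2$ in (i) uniformly, with no induction), at the cost of having to verify the interval description carefully; the paper's route is more pedestrian but each step is a direct computation.

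One small slip worth fixing: to show that for fixed $y=x_{p-1}+x_p$ the difference $x_1-x_{p-1}$ sweeps all integers in $[-y,\,n-y]$, you say one may ``trade a $p-1$ for a $1$''; that particular trade ($x_{p-1}\mapsto x_{p-1}-1$, $x_1\mapsto x_1+1$) changes $y$ and so leaves the interval you are analyzing. The correct unit moves are the trades $m_0\leftrightarrow m_1$ and $m_{p-1}\leftrightarrow m_p$, which preserve $y$: starting from $(x_0,x_1,x_{p-1},x_p)=(n-y,0,y,0)$, converting $m_{p-1}$'s to $m_p$'s raises $x_1-x_{p-1}$ from $-y$ to $0$, and then converting $m_0$'s to $m_1$'s raises it to $n-y$, one unit at a time. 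With that justification repaired, the interval description — and hence the whole argument — is sound.
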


\begin{proof} (i) Clearly
$$(I_p)^n \subseteq (I_{p,i,0})^n \subseteq (\Xbf^{\albf}| \ |\albf| = pn).$$
We first prove the reverse inclusions in the case $n= p-2$. Let $\alpha_1, \alpha_2 \in \Nset$ such that $\alpha_1 + \alpha_2 = p(p-2)$. Let $\alpha_1 = ap+j$, where $0\leq j < p$ and $a\leq p-2$. If $j\leq p-a -2$, then $p-2-a-j \geq 0$, and
$$\begin{array}{ll}
X_1^{\alpha_1}X_2^{\alpha_2} &= (X_1X_2^{p-1})^j(X_1^p)^a (X_2^p)^{p-2-a-j} \\
&= m_{p-1}^jm_0^a m_p^{p-2-a-j} \in (I_p)^{p-2}.
\end{array}$$
Assume that $j \geq p-a-1$. Then $a\leq p-3$,  $a+j+1-p \geq 0$ and
$$X_1^{\alpha_1}X_2^{\alpha_2} = m_1^{p-j} m_0^{a+j+1-p} m_p ^{p-3-a}  \in (I_p)^{p-2}.$$
In both cases $X_1^{\alpha_1}X_2^{\alpha_2}  \in (I_p)^{p-2}$, which yields $(\Xbf^{\albf}| \ |\albf| = p (p-2)) \subseteq (I_p)^{p-2}$, whence
$$(I_p)^{p-2} = (I_{p,i,0})^{p-2} =  (\Xbf^{\albf}| \ |\albf| = p (p-2) ).$$
For $n> p-2$, by induction we have 
$$(\Xbf^{\albf}| \ |\albf| = pn) = (X_1^p, X_2^p)(\Xbf^{\albf}| \ |\albf| = p(n-1)) = (m_0,m_p) (I_p)^{n-1} \subseteq (I_p)^n.$$
This implies (i).
\vskip0.3cm

(ii) Let  $u\leq i-2$ and $v\leq p-i-2$. Assume that
\begin{equation}\label{EG30}
m_i m_0^um_p^v = m_0^{\alpha_0}m_p^{\alpha_p}m_1^{\alpha_1}m_{p-1}^{\alpha_{p-1}} ,\ \alpha_0 + \alpha_1 + \alpha_p + \alpha_{p-1} = 1+u+v.
\end{equation} 
Note that $\alpha_k \leq p-3$ for all $k= 0,1, p, p-1$. Comparing the exponents of $X_1$ and $X_2$ in both sides of the above equality, we get
\begin{eqnarray}
\label{EG31} p-i+pu &= & \alpha_0 p  + \alpha_1 (p-1) + \alpha_{p-1} = (\alpha_0 + \alpha_1)p + \alpha_{p-1} - \alpha_1,\\
\label{EG32} i +pv &=& \alpha_1 + \alpha_{p-1}(p-1) + \alpha_pp = (\alpha_p + \alpha_{p-1})p + \alpha_1 - \alpha_{p-1}.
\end{eqnarray}

{\it Case 1}. $\alpha_1 \geq \alpha_{p-1}$. Since $\alpha_1<p$, from (\ref{EG32}) it follows that $\alpha_1 - \alpha_{p-1} = i $. In particular $\alpha_1 \geq i$. Then (\ref{EG31})  gives $i -1 \geq u+1 = \alpha_0 + \alpha_1 \geq i$, a contradiction.

{\it Case 2}. $\alpha_1 < \alpha_{p-1}$. Then, by (\ref{EG31}) $\alpha_{p-1} - \alpha_1 = p-i$, whence $\alpha_{p-1} \geq p-i $, and by (\ref{EG32}), $p-i-2 \geq v = \alpha_p + \alpha_{p-1} - 1 \geq p-i -1$, a contradiction. 

Summing up, (\ref{EG30}) cannot happen.

\vskip0.3cm

(iii) The proof of this is similar to the proof of  Lemma \ref{G2}. 
\end{proof}

By Lemma \ref{G1}(i), $\epsilon(I;n)$ is non-increasing for $n\geq r(I)$. One may expect that after $r(I)$, the function $\epsilon(I;n)$ becomes stable (i.e., equals a constant) very soon. However, using the above result we can now construct ideals $I$ with small reduction number $r(I)$ and big index of stability $\gstab(I)$. It shows that, in general,  the reduction number $r(I)$ and the number of variables are not enough to bound $\gstab(I)$. A bound on $\gstab(I)$ must depend on the generating degrees of $I$. The fact that $r(I_{p,2,a}) = \lceil \frac{p-1}{2} \rceil - 1$ was pointed out by the referee.

\begin{lem} \label{G4}  Given $p\geq 4$, $2\leq i \leq \lfloor p/2\rfloor$ and $a\geq 1$. Let 
$$I_{p,i ,a} = (m_0,m_p,m_1,m_{p-1}, m_i X_3^a) \subset K[X_1,X_2,X_3].$$
Then $I_p = (m_0,m_p,m_1,m_{p-1})$ is a reduction of $I_{p,i,a}$, and
\begin{itemize}
\item[(i)] $r(I_{p,i,a}) \geq \lceil \frac{p-1}{i} \rceil - 1$. The equality holds if $i=2$.
\item[(ii)] $\epsilon(I_{p,i,a}; n) =  an$ for $1\leq n \leq  \lceil \frac{p-1}{i} \rceil - 1$ and $\epsilon(I_{p,i,a}; n) = 0$  for $n\geq p-2$.
\item[(iii)] (The case $i= \lfloor p/2\rfloor$): $r(I_{p,\lfloor p/2\rfloor, a}) = 1$, and
$$\epsilon(I_{p,\lfloor p/2\rfloor, a}; n) = \begin{cases}  a & \ \text{if}\ 1\leq n\leq p-3,\\
0  & \ \text{if}\ n\geq p-2.
\end{cases}$$
\end{itemize} 
\end{lem}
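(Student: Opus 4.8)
The plan is to exploit that the variable $X_3$ occurs only in the single new generator $g:=m_iX_3^a$, which lets me reduce every question to the pure ideal $I_p\subset K[X_1,X_2]$ via Lemma \ref{G3}. Two elementary observations drive everything. First, each of the five generators of $I:=I_{p,i,a}$ has $X_1$-degree plus $X_2$-degree equal to $p$, so every monomial in $I^n$ has $X_1X_2$-degree at least $pn$. Second, I set $\rho:=\min\{s\geq 1\mid m_i^s\in I_p^s\}$, which is finite with $\rho\leq p-2$ by Lemma \ref{G3}(i).

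I would first pin down Kodiyalam's data. By the degree observation, $m_0^n=X_1^{pn}$ is a minimal generator of $I^n$ for all $n$, so $p(I)\geq p$. For the only generator of degree exceeding $p$, namely $g$, I claim $g^n$ is a minimal generator of $I^n$ if and only if $n<\rho$: the quotients $g^n/X_1$ and $g^n/X_2$ have $X_1X_2$-degree $pn-1$ and hence leave $I^n$, whereas $g^n/X_3=m_i^nX_3^{an-1}$ lies in $I^n$ exactly when some $m_i^s\in I_p^s$ with $1\leq s\leq n$ (the deficient $X_3$-degree forces a non-$g$ factor, and a degree count matches the $X_1,X_2$-exponents precisely to a membership $m_i^s\in I_p^s$), i.e.\ iff $n\geq\rho$. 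Thus $g^n\in\mfr I^n$ once $n\geq\rho$, so $g$ does not contribute to $p(I)$, giving $p(I)=p$ and Kodiyalam reduction $J=I_p$. Running the same bookkeeping on the identity $I^{n+1}=I_pI^n+(g^{n+1})$ shows $I^{n+1}=I_pI^n$ iff $g^{n+1}\in I_pI^n$ iff $n+1\geq\rho$; hence $I_p$ is a reduction of $I$ and $r(I)=r_{I_p}(I)=\rho-1$.

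Part (i) is now immediate: Lemma \ref{G3}(iii) gives $m_i^t\notin I_p^t$ for $t<\frac{p-1}{i}$, so $\rho\geq\lceil\frac{p-1}{i}\rceil$ and $r(I)=\rho-1\geq\lceil\frac{p-1}{i}\rceil-1$. For the equality when $i=2$ I would exhibit $m_2^s\in I_p^s$ at $s=\lceil\frac{p-1}{2}\rceil$ by the explicit product $m_0^{\,p-2-s}m_1^{\,2s-(p-1)}m_{p-1}$, whose exponents are non-negative, sum to $s$, and whose $X_2$-degree equals $2s=\deg_{X_2}(m_2^s)$; this forces $\rho=\lceil\frac{p-1}{2}\rceil$. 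For part (ii), if $n\leq\lceil\frac{p-1}{i}\rceil-1\leq\rho-1$ then $g^n$ is a minimal generator, so $(p+a)n\leq d(I^n)\leq d(I)n=(p+a)n$ and $\epsilon(I;n)=an$; while for $n\geq p-2$ any generator $vg^k$ with $v\in I_p^{n-k}$ and $k\geq 1$ has $X_1X_2$-part $vm_i^k$, a degree-$pn$ monomial in $X_1,X_2$, hence $vm_i^k\in I_p^n$ by Lemma \ref{G3}(i), so $vg^k=(vm_i^k)X_3^{ak}\in\mfr I^n$; thus $d(I^n)=pn$ and $\epsilon(I;n)=0$.

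For part (iii), with $i=\lfloor p/2\rfloor$ one checks $m_i\notin I_p$ but $m_i^2\in I_p^2$ (take $m_0m_p$ if $p$ is even and $m_0m_{p-1}$ if $p$ is odd), so $\rho=2$ and $r(I)=1$. By Lemma \ref{G1}(i) the function $\epsilon(I;n)$ is then non-increasing for $n\geq 1$, and $\epsilon(I;1)=a$, so $\epsilon(I;n)\leq a$ for all $n$; in particular no minimal generator of $I^n$ can involve two or more factors of $g$. It remains, for each $1\leq n\leq p-3$, to produce a single minimal generator using exactly one factor of $g$, which then forces $\epsilon(I;n)=a$. I would take $u:=m_im_0^{\,b}m_p^{\,c}X_3^a=g\,m_0^{\,b}m_p^{\,c}\in I^n$ with $b+c=n-1$, $b\leq i-2$, $c\leq p-i-2$; such a splitting exists precisely because $n-1\leq p-4$. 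Its minimality is the crux: $u/X_1$ and $u/X_2$ have $X_1X_2$-degree $pn-1$ and so escape $I^n$, while $u/X_3=(m_im_0^{\,b}m_p^{\,c})X_3^{a-1}$ cannot lie in $I^n$ because its insufficient $X_3$-degree would force $m_im_0^{\,b}m_p^{\,c}\in I_p^{n}$, contradicting Lemma \ref{G3}(ii); the range $n\geq p-2$ is already handled by (ii). The main obstacle throughout is exactly this minimal-generator analysis of the mixed monomials $vg^k$: keeping the $X_3$-degree bookkeeping honest so that each divisibility question collapses to a clean membership $m_i^s\in I_p^s$ or $m_im_0^{\,b}m_p^{\,c}\in I_p^{1+b+c}$ to which Lemma \ref{G3} applies.
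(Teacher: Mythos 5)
Your proof is correct and takes essentially the same route as the paper: every membership question is collapsed onto $I_p$ and Lemma \ref{G3} by separating the $X_3$-degree from the $X_1X_2$-degree (your $\rho$ is exactly the quantity in Remark \ref{EG2}, so $r(I_{p,i,a})=\rho-1$), and your key witnesses --- $m_i^2=m_0m_p$ or $m_0m_{p-1}$ in (iii), and the minimal generators $m_iX_3^a\,m_0^bm_p^c$ justified by Lemma \ref{G3}(ii) --- are the paper's. The only cosmetic deviations are your factorization $m_2^s=m_0^{p-2-s}m_1^{2s-(p-1)}m_{p-1}$ for the equality at $i=2$ (the paper uses $m_0^{q-1}m_p$ resp.\ $m_0^{q-1}m_{p-1}$; both check out) and deriving the upper bound $d((I_{p,\lfloor p/2\rfloor,a})^n)\le pn+a$ from Lemma \ref{G1}(i) with $r=1$ rather than from the identity $(I_{p,q,a})^n=(I_p)^{n-1}I_{p,q,a}$.
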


\begin{proof} The fact that $I_p = (m_0,m_p,m_1,m_{p-1})$ is a reduction of $I_{p,i,a}$ in the Kodiyalam's construction is clear. Hence $p(I_{p,i,a}) = p$.

(i) Assume that $(m_iX_3^a)^t \in (I_p)^t$ for some $t\geq 1$. Since no generator of $I_p$ contains $X_3$, $m_i^t \in (I_p)^t$. By Lemma \ref{G3}(iii), $t \geq \frac{p-1}{i}$, whence $t\geq \lceil \frac{p-1}{i} \rceil$. Since $ I_{p,i,a} = (I_p, m_iX_3^a)$, by Remark \ref{EG2},  $r( I_{p,i,a}) \geq \lceil \frac{p-1}{i} \rceil - 1$.

Let $i=2$. If $p= 2q$, then $m_2^q = X_1^{q(2q-2)}X_2^{2q} = m_0^{q-1}m_p$. If $p= 2q+1$, then $m_2^q = X_1^{q(2q-1)}X_2^{2q} = m_0^{q-1}m_{p-1}$. In both cases $m_2^q \in I_p^q$. By Remark \ref{EG2}, $r(I_{p,2,a}) \leq q = \lceil \frac{p-1}{2} \rceil - 1$, whence $r(I_{p,2,a}) = \lceil \frac{p-1}{2} \rceil - 1$.
\vskip0.3cm

(ii)  Assume that $n\geq p-2$. By Lemma \ref{G3}(i)  we have
$$  (I_p)^n \subseteq (I_{p,i ,a})^n \subseteq (I_{p,i,0})^n =   (I_p)^n .$$
Here we consider the ideals $I_p$ and $I_{p,i,0}$ defined in Lemma \ref{G3} as  ideals in $K[X_1,X_2,X_3]$. Hence $(I_{p,i,a})^n  =   (I_p)^n$, which yields $d((I_{p,i,a})^n)  =   d((I_p)^n) = pn$, or equivalently $\epsilon (I_{p,i,a}; n) =0$ for all $n\geq p-2$.

Further, let $t\leq \lceil \frac{p-1}{i} \rceil - 1$. Assume that $(m_iX_3^a)^t \in \mfr (I_{p,i,a})^t$. Then
$$(m_iX_3^a)^t = m' m_0^{\alpha_0} m_p^{\alpha_p} m_1^{\alpha_1}m_{p-1}^{\alpha_{p-1}}(m_iX_3^a)^\alpha,$$
for some $\alpha, \alpha_i\in \Nset$ such that $\alpha_0+ \alpha_p+ \alpha_1+ \alpha_{p-1} + \alpha = t$ and $m'\neq 1$ is a monomial. Comparing the total degrees  in both sides we must have $\alpha< t$. But then $m_i^{t-\alpha} \in (I_p)^{t- \alpha}$, which is impossible by Lemma \ref{G3}(iii). Hence  $(m_iX_3^a)^t$ is a minimal generator of $(I_{p,i,a})^t$, which yields $\epsilon(I_{p,i,a}; t) \geq at$, whence $\epsilon(I_{p,i,a}; t) = at$.

(iii) Let $p = 2q + \varepsilon$, $\varepsilon= 0,1$.  This means $q = \lfloor p/2\rfloor$. Since 
$$(m_q X_3^a)^2 = X_1^{2(q+\varepsilon)} X_2^{2q} X_3^{2a} =
 \begin{cases} m_0m_p X_3^{2a} &  \ \text{if}\ \varepsilon = 0,\\
 m_0m_{p-1} X_3^{2a} &  \ \text{if}\ \varepsilon = 1,
\end{cases}$$
$(m_qX_3^a)^2 \in (I_p)^2$. Hence $(I_{p,q,a})^n = (I_p)^{n-1}I_{p,q,a}$ for all $n\geq 2$. This means $I_p$ is a reduction of $I_{p,q,a}$ and $r(I_{p,q,a}) = 1$. This also implies 
\begin{equation}\label{EG40}
d((I_{p,q,a})^n ) \leq p(n-1) + p+a = pn +a.
\end{equation} 

Let  $u \leq q-2$ and $v \leq p-q-2$. Assume that $(m_q X_3^a)m_0^um_p^v \in \mfr (I_{p,q,a})^{1+u+v}$. Then
\begin{equation}\label{EG41}
(m_q X_3^a)m_0^um_p^v  = m' m_0^{\alpha_0} m_p^{\alpha_p} m_1^{\alpha_1}m_{p-1}^{\alpha_{p-1}}(m_qX_3^a)^{\alpha_q},
\end{equation}
where $1\neq m'$ is a monomial of $K[X_1,X_2,X_3]$, $\alpha_i\in \Nset$ and $\alpha_0 + \alpha_p + \alpha_1 + \alpha_{p-1} + \alpha_q = 1 + u+ v$.  
A comparison of the  total  degrees  in two sides gives $\alpha_q =0$, and (\ref{EG41}) becomes
$$m_q m_0^um_p^v  = m" m_0^{\alpha_0} m_p^{\alpha_p} m_1^{\alpha_1}m_{p-1}^{\alpha_{p-1}} \in (I_p)^{1+u+v},$$
which is impossible by virtue of Lemma \ref{G3}(ii). This says that $(m_qX_3^a)m_0^um_p^v$ is a minimal generator of $(I_{p,q,a})^{1+u+v}$, whence $d((I_{p,q,a})^{n}) \geq  pn +a$ for all $n\leq 1 + q-2 + p-q -2  = p-3$. Together with (\ref{EG40}), we get $d((I_{p,qa})^{n}) =  pn +a$, and so $\epsilon(I_{p,qa}; n) = a$ for all $n\leq p-3$.
\end{proof} 
 
Using the following simple observation and the ideals in Lemmas \ref{G2} and \ref{G4}, one can construct ideals with more complicate degree excess functions. The following proof is suggested by the referee, which is much simpler than the original one.  

\begin{lem} \label{G5}  Let $I \subset K[\Xbf]$ and $J \subset K[\Ybf]$ be non-zero proper homogeneous ideals, where $\Xbf$ and $ \Ybf$ are disjoint sets of variables. Assume that $\{f_1,...,f_t\}\subset K[\Xbf]$ and $\{ g_1,...,g_u \} \subset K[\Ybf]$ are homogeneous polynomials which form minimal bases of $I$ and $J$, respectively. Then the set $\{f_ig_j| \ i=1,...,t; j= 1,...,u \}$ forms a minimal basis of the product $(IJ) : = IJK[\Xbf, \Ybf]$. It implies that the  degree excess functions  has the following property
$$\epsilon(IJ; n) = \epsilon(I;n) + \epsilon(J;n).$$
\end{lem}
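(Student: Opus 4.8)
The plan is to prove two things in sequence: first the combinatorial/algebraic claim that $\{f_ig_j\}$ is a minimal generating set for $IJ$ in the polynomial ring $K[\Xbf,\Ybf]$, and then deduce the additivity of the degree excess function as a formal consequence. For the first part, I would exploit the fact that $\Xbf$ and $\Ybf$ are disjoint variable sets, so that $K[\Xbf,\Ybf] \cong K[\Xbf]\otimes_K K[\Ybf]$ as graded $K$-algebras. Under this tensor decomposition, the product ideal $IJK[\Xbf,\Ybf]$ is exactly $I\otimes_K J$ viewed inside $K[\Xbf]\otimes_K K[\Ybf]$. The products $f_ig_j$ clearly generate this ideal, so the real content is \emph{minimality}: no $f_ig_j$ lies in the ideal generated by the others.

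To establish minimality I would work with the grading and the maximal ideal $\mfr = (\Xbf,\Ybf)$ of $K[\Xbf,\Ybf]$. A generating set of a homogeneous ideal $L$ is minimal precisely when its images form a $K$-basis of $L/\mfr L$. So I would compute $\dim_K (IJ)/\mfr(IJ)$ and compare it with $tu$. Here the tensor structure does the work: since $\mfr(I\otimes J) = \mfr_{\Xbf}I \otimes J + I\otimes \mfr_{\Ybf}J$ where $\mfr_{\Xbf}=(\Xbf)$ and $\mfr_{\Ybf}=(\Ybf)$, one gets $(I\otimes J)/\mfr(I\otimes J) \cong (I/\mfr_{\Xbf}I)\otimes_K(J/\mfr_{\Ybf}J)$, whose dimension is $t\cdot u$ because $\{f_i\}$ and $\{g_j\}$ are minimal bases of $I$ and $J$. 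Since the $tu$ elements $f_ig_j$ span this quotient and its dimension is exactly $tu$, they must be $K$-linearly independent modulo $\mfr(IJ)$, hence form a minimal basis. (Alternatively, one can argue more directly: if $\sum_{(i,j)\ne(i_0,j_0)} h_{ij}f_ig_j = f_{i_0}g_{j_0}$ with $h_{ij}\in\mfr$, then specializing or using the bidegree separation in $\Xbf$ versus $\Ybf$ forces a contradiction with the minimality of $\{f_i\}$ or $\{g_j\}$; but the tensor-product dimension count is cleaner.)

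Once minimality is in hand, the additivity of the degree excess function follows almost mechanically. The key observations are that $(IJ)^n = I^nJ^n$, that $p(IJ)=p(I)+p(J)$, and that $d((IJ)^n)=d(I^n)+d(J^n)$. The first is immediate. For the degree statement I would apply the just-proved minimality claim to the pair of ideals $I^n$ and $J^n$ (in place of $I$ and $J$): a minimal generator of $(IJ)^n = I^n J^n$ is of the form $F\cdot G$ with $F$ a minimal generator of $I^n$ and $G$ a minimal generator of $J^n$, and its degree is $\deg F + \deg G$. The maximal such total degree is achieved by taking $F$ and $G$ of maximal degree separately, since they involve disjoint variables and can be chosen independently; hence $d((IJ)^n) = d(I^n)+d(J^n)$. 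Taking maximal-degree generators in each factor also shows $p(IJ) = p(I)+p(J)$ via the Kodiyalam construction (a product $f_ig_j$ fails to lie in $\mfr(IJ)$ after raising to a power iff both factors individually avoid their respective maximal ideals). Subtracting then gives
\begin{align*}
\epsilon(IJ;n) &= d((IJ)^n) - p(IJ)n \\
&= \bigl(d(I^n)+d(J^n)\bigr) - \bigl(p(I)+p(J)\bigr)n \\
&= \epsilon(I;n) + \epsilon(J;n).
\end{align*}

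The main obstacle is the minimality assertion: it is the only step with real mathematical content, and the tensor-product dimension count is the cleanest route to it, though one must be careful that $\mfr(I\otimes J)$ decomposes as claimed and that the quotient identification is as graded $K$-vector spaces. Everything downstream — the equalities $(IJ)^n=I^nJ^n$, $p(IJ)=p(I)+p(J)$, and the degree additivity — reduces to reapplying the minimality claim to powers and then reading off top degrees, which is routine once the disjointness of variables is used to separate the two factors.
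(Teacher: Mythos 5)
Your proposal is correct and takes essentially the same route as the paper: the paper also proves minimality of $\{f_ig_j\}$ by the tensor-product identification $\frac{I}{\mfr I}\otimes_K \frac{J}{\nfr J} \cong \frac{IJ}{(\mfr T+\nfr T)IJ}$ (so $\mu(IJ)=\mu(I)\mu(J)$), and then applies this claim to $I^n$ and $J^n$ to get $d((IJ)^n)=d(I^n)+d(J^n)$. The only cosmetic difference is that you verify $p(IJ)=p(I)+p(J)$ directly via Kodiyalam's construction, while the paper reads it off implicitly from the asymptotic linearity $d(L^n)=p(L)n+q_0(L)$ for $n\gg 0$ applied to $I$, $J$ and $IJ$; both justifications are immediate.
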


\begin{proof} Applying the first claim to $I^n$ and $J^n$ for all $n$ we immediately get $d((IJ)^n) = d(I^n) + d(J^n)$, which then implies $\epsilon(IJ; n) = \epsilon(I;n) + \epsilon(J;n).$

In order to prove the first claim, denote $\mfr = (\Xbf)$, $\nfr = (\Ybf)$ and $T= K[\Xbf, \Ybf]$. Note that
$$ \frac{I}{\mfr I} \otimes_K \frac{J}{\nfr J}  \cong \frac{I\otimes_K J}{I \otimes_K (\nfr J) + (\mfr I) \otimes_K J}
\cong \frac{IJ}{I\nfr J + \mfr IJ} = \frac{IJ}{(\mfr T + \nfr T) IJ}.$$
Therefore $\mu(IJ) = \mu(I) \mu(J)$, where $\mu (.)$ denotes the minimal number of generators. Since the set $\{f_ig_j| \ i=1,...,t; j= 1,...,u \}$ generates $IJ$, it must be a minimal basis of $IJ$.
\end{proof}

The following two results say that a degree excess function could be quite complicate.

\begin{thm} \label{G6}  Let $f:\ \Nset \rightarrow \Nset$ be any non-increasing function. Then there is a monomial ideal $I$ such that $\epsilon(I;n) = f(n)$ for all $n\geq 1$.
\end{thm}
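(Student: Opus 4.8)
The plan is to write the target function $f$ as a sum of a few elementary non-increasing functions, to realize each of these by an explicit monomial ideal living in its own set of variables, and then to glue the ideals together using the additivity $\epsilon(IJ;n)=\epsilon(I;n)+\epsilon(J;n)$ from Lemma \ref{G5}. First I would record that a non-increasing $f\colon\Nset\to\Nset$ is automatically eventually constant: the sequence $f(1)\geq f(2)\geq\cdots$ consists of non-negative integers, so it can strictly decrease only finitely often. Set $c:=\lim_{n\to\infty}f(n)$ and $d_k:=f(k)-f(k+1)\geq 0$ for $k\geq 1$, a sequence with only finitely many nonzero terms. Then for every $n\geq 1$ one has $f(n)=c+\sum_{k\geq n}d_k$; that is, $f$ is the sum of the constant function $c$ together with, for each $k$ with $d_k>0$, the single down-step of height $d_k$ that equals $d_k$ on $\{1,\dots,k\}$ and vanishes afterwards.

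Next I would realize the two kinds of summands. A single down-step of height $a$ that vanishes after position $k$ is exactly $\epsilon(I_{p,\lfloor p/2\rfloor,a};\cdot)$ with $p=k+3$, by Lemma \ref{G4}(iii): it equals $a$ for $1\leq n\leq p-3=k$ and $0$ for $n\geq p-2=k+1$, and $p=k+3\geq 4$ is admissible since $k\geq 1$. The only piece not covered by these blocks is the constant $c$ when $c\geq 1$, because every block of Lemma \ref{G4} dies out for large $n$; producing a flat (constant) degree excess is the main obstacle, since the blocks of Lemma \ref{G2} are increasing-then-constant and so cannot be superposed either. I would resolve it with the three-variable ideal
$$C_c=(X_1^{2c+2},\,X_2^{2c+2},\,g),\qquad g:=X_1^{c+1}X_2^{c+1}X_3^{c},$$
and claim $\epsilon(C_c;n)=c$ for all $n\geq 1$. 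Writing $p=2c+2$, every generator but $g$ has degree $p$ and $\deg g=p+c$. Since $g^2=X_3^{2c}X_1^{p}X_2^{p}\in\mfr C_c^{2}$, any product of generators in which $g$ occurs at least twice lies in $\mfr C_c^{n}$ and hence is not a minimal generator; in particular $p(C_c)=p$ and $J=(X_1^{p},X_2^{p})$ is the Kodiyalam reduction. On the other hand the fresh variable $X_3$ forces survival of $g$ used once: the monomial $g\,(X_2^{p})^{n-1}$ has degree $pn+c$ and cannot be divided by any degree-$pn$ generator of $C_c^{n}$ (those carry no $X_3$ and too little $X_1$), so it is a minimal generator. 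Thus $d(C_c^{n})=pn+c$ and $\epsilon(C_c;n)=c$.

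Finally I would assemble the ideal: take the constant block $C_c$ (omitted when $c=0$) together with one down-step block $I_{k+3,\lfloor(k+3)/2\rfloor,d_k}$ for each $k$ with $d_k>0$, each realized in a set of variables disjoint from all the others, and let $I$ be their product. Iterating Lemma \ref{G5} gives, for every $n\geq 1$,
$$\epsilon(I;n)=c+\sum_{k\geq n}d_k=f(n),$$
since the block indexed by $k$ contributes $d_k$ precisely when $n\leq k$. All the ideals involved are monomial, so $I$ is monomial, which completes the construction. To summarize the difficulty: the down-step blocks and Lemma \ref{G5} handle the decreasing part of any non-increasing $f$ essentially for free, and the one genuinely new ingredient is the constant block $C_c$, engineered so that its extra generator $g$ is forced to appear at most once (through $g^{2}\in\mfr C_c^{2}$) yet at least once (through the fresh variable $X_3$) among the minimal generators of every power.
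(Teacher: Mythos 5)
Your proof is correct, and for the decreasing part it is essentially the paper's own argument: you telescope $f$ into down-steps $d_k=f(k)-f(k+1)$, realize each by $I_{k+3,\lfloor (k+3)/2\rfloor,\, d_k}$ via Lemma \ref{G4}(iii), and sum the excess functions with Lemma \ref{G5}; the paper does exactly this with the level-set bookkeeping $p_i=n_{i+1}+2$, $a_i=d_i-d_{i+1}$. The genuine divergence is the constant part $c=\lim_n f(n)$, and there your diagnosis is wrong but your cure is right. You assert that the blocks of Lemma \ref{G2} ``cannot be superposed'' to produce a constant; in fact $J_{c+3,c+1}$ satisfies the hypotheses of that lemma (here $p-1=c+2$ is not divisible by $a=c+1\ge 2$), has $r=\lfloor (c+2)/(c+1)\rfloor=1$, so its increasing phase is empty and $\epsilon(J_{c+3,c+1};n)=r(a-1)=c$ for all $n\ge 1$ --- this two-variable ideal is precisely what the paper multiplies in to absorb the limit value. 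Your replacement $C_c=(X_1^{p},X_2^{p},g)$ with $p=2c+2$, $g=X_1^{c+1}X_2^{c+1}X_3^{c}$ nevertheless works: $g^2=X_1^{p}X_2^{p}X_3^{2c}\in\mfr (X_1^{p},X_2^{p})^2$ gives $r(C_c)=1$, $p(C_c)=p$ and $d(C_c^n)\le pn+c$; for the lower bound, the generators of $C_c^n$ are the monomials $X_1^{pa}X_2^{pb}g^{e}$ with $a+b+e=n$, of degree $pn+ce$, so a proper divisor of $gX_2^{p(n-1)}$ among them must have $e=0$, and then $a\ge 1$ is impossible because $pa\ge 2c+2>c+1$ (too \emph{much} $X_1$, not ``too little'' as your parenthetical has it --- and the absence of $X_3$ in such a generator is no obstruction to divisibility at all), while $a=0$ fails because $pn>p(n-1)+c+1$. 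Hence $gX_2^{p(n-1)}$ is a minimal generator of $C_c^n$ and $\epsilon(C_c;n)=c$ for all $n\ge1$, as you claim. So you pay one extra variable and this verification for an ingredient the paper obtains for free from Lemma \ref{G2}. One trivial loose end (present in the paper's proof as well, behind its ``we may assume $d_1>0$''): if $f\equiv 0$ your collection of blocks is empty, so you should say explicitly that any ideal generated in a single degree, e.g.\ $\mfr$ itself, has identically zero degree excess function.
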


\begin{proof} Assume that
$$f(n) = \begin{cases} d_1  &  \ \text{if} \ 1 = n_1 \leq n < n_2, \\
  d_2  &  \ \text{if} \ n_2 \leq n < n_3, \\
  & ... \\
  d_{k-1}  &  \ \text{if} \ n_{k-1} \leq n < n_k, \\
  d_k  &  \ \text{if} \  n\geq n_k, 
\end{cases}$$
where $d_1 > d_2 > \cdots > d_k$ and $1= n_1< n_2 < \cdots < n_k$. If $k=1$ we may assume that $d_1>0$. Then the ideal $I = J_{d_1+3, d_1+1}$ defined in Lemma \ref{G2} has $\epsilon(I,n) = d_1$ for all $n\geq r(I) = \lfloor (d_1+2)/ (d_1+1)\rfloor = 1 = n_1$. 

Let $k\geq 2$. Set $p_i = n_{i+1} + 2$ and $a_i = d_i - d_{i+1}  \geq 1$, where $i= 1,..., k-1$. Consider the ideals $I_{p_i, \lfloor p_i/2\rfloor, a_i}$ of the ring $K[\Xbf_{(i)}] := K[X_{i1},X_{i2},X_{i3}] $ defined in Lemma \ref{G4}, where $\Xbf_{(1)}, ..., \Xbf_{(k-1)}$ are disjoint sets of variables. Let
$$I = \prod_{i=1}^{k-1} I_{p_i, \lfloor p_i/2\rfloor , a_i} \subset K[\Xbf_{(1)}, ..., \Xbf_{(k-1)}].$$
Assume that $n_j\leq n< n_{j+1}$ for some $j=1,...,k$, where $n_{k+1} := \infty$. If $i\leq j-1$, then $p_i - 2= n_{i+1} \leq n_j \leq n$. By Lemma \ref{G4}, $\epsilon (I_{p_i, \lfloor p_i/2\rfloor , a_i}; n) = 0$. If $i\geq j$, then $p_i - 2= n_{i+1} \geq n_{j+1} >  n$. By Lemma \ref{G4}, $\epsilon (I_{p_i, \lfloor p_i/2\rfloor , a_i}; n) = a_i $. Therefore, by Lemma \ref{G5},
$$\begin{array}{ll}
\epsilon (I;n) & = \sum_{i=1}^{k-1} \epsilon(I_{p_i,\lfloor p_i/2\rfloor , a_i}; n) = a_j + a_{j+1} + \cdots + a_{k-1} \\
&= (d_j-d_{j+1}) + (d_{j+1} - d_{j+2}) + \cdots + (d_{k-1} - d_k) = d_j - d_k.
\end{array}$$
Thus,  $\epsilon(I;n) = f(n) - d_k$ for all $n\geq 1$. If $d_k = 0$ then we are done. If $d_k>0$, we set $I' = IJ \subset K[\Xbf_{(1)}, ..., \Xbf_{(k)}]$, where $J = J_{d_k+3, d_k+1}$ is the ideal of $K[\Xbf_{(k)} := K[X_{k1}, X_{k2}]$ defined in Lemma \ref{G2}. By this lemma,  $r(J) = 1$ and $\epsilon (J;n) = d_k$ for all $n\geq 1$. Using again Lemma \ref{G5}, we then get
$$\epsilon (I'; n) = \epsilon(I;n) + \epsilon(J;n) = f(n) - d_k + d_k = f(n),$$
for all $n\geq 1$.
\end{proof}

\noindent {\it Remark}. If $r(I) =0$, then $\epsilon(I;n) = 0$ for all $n\geq 1$. If $r(I) = 1$, by Lemma \ref{G1}, $\epsilon(I; n) $ is non-increasing for all $n$. Both ideals $I$ and $I'$ in the proof of the above theorem have reduction number one. So Theorem \ref{G6} implies that a numerical function $f: \Nset \rightarrow \Nset$ is a degree excess functions of a homogeneous ideal $I$ with $r(I) = 1$ if and only if it is non-increasing.
\vskip0.5cm

In Lemma \ref{G2} we already construct ideals with a non-decreasing degree excess function. Combining this lemma with Lemma \ref{G5}, one can construct ideals with more complicate non-decreasing degree excess functions. However, unlike the case of non-increasing functions, due to the restriction given in Lemma \ref{G1}(iv), not any convergent non-decreasing function could be realized as a degree excess function (of a homogeneous ideal).  For an example, if $f(n) = 1$ and $f(n+1) > 1$ for some $n\geq 2$, then $f(n)$ cannot be a degree excess function. However, the following result says that the degree excess function could get  local maxima at any given collection of positions. Here, a numerical function $f:\ \Nset \rightarrow \Zset$ is said to have a local maxima at $n\geq 2$ if $f(n) > f(n-1)$ and $f(n) > f(n+1)$. It is of interest to mention that, inspired of  the  comparison of numbers of generators of $I$ and $I^2$ in \cite{HSZ}, it was shown in \cite{AHZ} that given a number $q$ there is a monomial ideal $I$ with the property that the function which describes the number of generators of $I^n$ has at least $q$ local maxima.

\begin{thm} \label{G7} Given any sequence of positive numbers $2 \leq n_1 < n_2 < \cdots < n_k$ ($k\geq 1$) such that $n_{i+1}- n_i \geq 2$. Then there is a monomial ideal such that its degree excess function gets  local maxima exactly at points of the set $\{n_1,...,n_k\}$.
\end{thm}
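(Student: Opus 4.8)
The plan is to build $I$ as a product, in pairwise disjoint sets of variables, of one ``ramp'' ideal together with $k$ ``step'' ideals, so that by Lemma \ref{G5} its degree excess function is the sum of the corresponding degree excess functions. Concretely, I will arrange that
\[
\epsilon(I;n) = \min\{n, n_k\} + \sum_{l=1}^{k} c_l\,\chi_l(n),
\]
where $\chi_l(n)=1$ for $n\le n_l$ and $\chi_l(n)=0$ for $n> n_l$, and the $c_l\ge 2$ are integers to be fixed. For the ramp I take $J_{2n_k+2,\,2}$ from Lemma \ref{G2}: with $a=2$ and $p=2n_k+2$ one has $p-1=2n_k+1$ odd (hence not divisible by $a$) and $r=\lfloor (p-1)/2\rfloor=n_k$, so $\epsilon(J_{2n_k+2,2};n)=\min\{n,n_k\}$. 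For each $l$ I take $I_{n_l+3,\,\lfloor (n_l+3)/2\rfloor,\,c_l}$ from Lemma \ref{G4}(iii): its defining exponent is $p=n_l+3$, so part (iii) gives value $c_l$ for $n\le p-3=n_l$ and $0$ for $n\ge p-2=n_l+1$, that is, exactly $c_l\chi_l(n)$. Placing all these ideals in disjoint variables and multiplying, Lemma \ref{G5} yields the displayed formula for $\epsilon(I;n)$; being a product of monomial ideals, $I$ is monomial.

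It then remains to verify that this explicit numerical function has strict local maxima exactly at $n_1,\dots,n_k$, which is a direct finite-difference computation. Writing $f(n)=\epsilon(I;n)$, one computes
\[
f(n+1)-f(n) = \mathbf{1}[n<n_k] - \sum_{l\,:\,n_l=n} c_l .
\]
The subtracted sum is nonzero only when $n$ equals some $n_l$, in which case it equals $c_l$ (the $n_l$ being distinct). Hence for every $n\notin\{n_1,\dots,n_k\}$ we get $f(n+1)-f(n)=\mathbf{1}[n<n_k]\ge 0$, so $f$ is non-decreasing across such $n$ and cannot have a local maximum there. At $n=n_l$ the formula gives $f(n_l+1)-f(n_l)=\mathbf{1}[n_l<n_k]-c_l$, which is negative once $c_l\ge 2$ (for $l=k$ even $c_k\ge 1$ suffices), so $f$ strictly decreases from $n_l$ to $n_l+1$.

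For the increase on the left I apply the same formula at $n=n_l-1$: $f(n_l)-f(n_l-1)=\mathbf{1}[n_l-1<n_k]-\sum_{l'\,:\,n_{l'}=n_l-1}c_{l'}=1-0=1>0$, where the first term is $1$ because $n_l\le n_k$, and the sum is empty precisely because the gap hypothesis $n_l-n_{l-1}\ge 2$ forbids $n_{l'}=n_l-1$. Thus each $n_l$ satisfies $f(n_l)>f(n_l-1)$ and $f(n_l)>f(n_l+1)$; since $n_l\ge n_1\ge 2$, these are local maxima in the sense defined, and by the previous paragraph there are no others. Choosing $c_l=2$ for every $l$ completes the construction.

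The only place the hypotheses are used, and the main point requiring care, is ruling out extra local maxima: the gap condition $n_{i+1}-n_i\ge 2$ is exactly what guarantees that a step drop never lands on the slot $n_l-1$ and so never cancels the ramp's unit increase feeding into $n_l$, while taking the step heights $c_l\ge 2$ guarantees each drop overpowers that unit increase at $n_l+1$. I expect the bookkeeping of these finite differences, rather than any new idea, to be the bulk of the work.
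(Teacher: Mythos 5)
Your proof is correct and is essentially the paper's own argument: the paper likewise adds a ramp $J_{2(n_k+2),2}$ from Lemma \ref{G2} to a non-increasing step function with drops of size $2$ just after each $n_i$, realized via Theorem \ref{G6} (whose construction, unwound, is exactly your product $\prod_l I_{n_l+3,\lfloor (n_l+3)/2\rfloor,2}$ from Lemma \ref{G4}(iii)), and combines them with Lemma \ref{G5}. Your only deviations are cosmetic --- inlining the Theorem \ref{G6} construction instead of citing it, a ramp with $r(J)=n_k$ rather than $n_k+1$, and a finite-difference verification in place of the paper's pointwise comparison at $n_i-1,\ n_i,\ n_i+1$ --- and both arguments use the gap hypothesis $n_{i+1}-n_i\ge 2$ in the same way.
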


\begin{proof} Let $J= J_{2(n_k+2), 2}$ be the ideal of $K[Y_1,Y_2]$ defined in Lemma \ref{G2}. Then $r(J) = \lfloor (2n_k+3)/ 2\rfloor = n_k +1$, and
$$\epsilon (J; n) = \begin{cases}  n & \ \text{if}\ n\leq  n_k, \\
  n_k + 1& \ \text{if}\ n\geq n_k +1.
\end{cases}$$
Consider the following function
$$f(n) = 2(k+1-i) \ \text{if} \ n_{i-1}< n\leq n_i, \ (i=1,...,k+1),$$
where we set $n_0 = 0$ and $n_{k+1} := \infty$. By Theorem \ref{G6}, there is a monomial ideal $I \subset K[X_1,...,X_s]$ such that $\epsilon(I;n) = f(n)$ for all $n\geq 1$. Consider the product $IJ \subset K[X_1,...,X_s,Y_1,Y_2]$. By Lemma \ref{G5}, we have
$$\begin{array}{ll}
\epsilon (IJ; n_i) &= f(n_i) + \epsilon(J,n_i) = 2(k+1-i) + n_i = 2(k-i) + n_i + 2,\\
\epsilon (IJ; n_i-1) &= f(n_i-1) + \epsilon(J,n_i-1) = 2(k+1-i) + n_i - 1= 2(k-i) + n_i + 1,\\
\epsilon (IJ; n_i+1) &= f(n_i+1) + \epsilon(J,n_i+1) = 2(k+1-i-1) + n_i + 1= 2(k-i) + n_i + 1,
\end{array}$$
for any $i=1,...,k$. Thus, $n_i$ is a local maximum point of $\epsilon(IJ; n)$. Since $f(n)$ is a constant and $\epsilon(J; n)$ is  non-decreasing in any interval $n_{i-1}< n< n_i $, where $i=1,...,k+1$, there is no other local maximum points of $\epsilon(IJ; n)$.
\end{proof}

\noindent {\it Remark}. In the proof of Theorem \ref{G7}, if instead of $J_{2(n_k+2),2}$ we take $J = I_{2n_k+4, 2, 1}$ defined in Lemma \ref{G4}, then by Lemma \ref{G4} and Lemma \ref{G1}(i), we get 
$$\epsilon (J; n) = \begin{cases}  n & \ \text{if}\ n\leq  n_k +1, \\
  & \ \ \text{non-increasing for}\ n_k+1 \leq  n \leq 2n_k + 1,\\
 0  &  \   \text{if}\ n\geq 2n_k+2.
\end{cases}$$
With the same choice of $I$, then $\epsilon (IJ; n)$ gets local maxima at points $\{n_1,...,n_k\}$. 

This  degree excess function  has a given number of local maxima and drops to $0$ when $n\gg 0$, while the one  in the proof of Theorem \ref{G7} has  a big limit value.

\section{Maximal generating degrees of powers of a monomial ideal} \label{B}

In this section we consider powers of monomial ideals. Using a  geometric interpretation, we give an explicit way to compute the coefficient $p(I)$ in the Kodiyalam's construction. Moreover we can give a bound on the reduction number $r(I)$. 

For a subset $A \subseteq R$, the exponent set of $A$ is $E(A) := \{\albf \mid \Xbf^{\albf}\in A\} \subseteq  \Nset^s$. Recall that the Newton polyhedron of a monomial ideal $I$ is $NP(I) := \conv (E(I))$, the convex hull of the exponent set of $I$ in
the space $\Rset^s$. If $G(I)= \{\Xbf^{\albf_1}, ..., \Xbf^{\albf_t}\}$ is the set of minimal generating monomials of $I$, then $E(G(I))= \{\albf_1,...,\albf_t\}$ and
$$NP(I) := \conv(\albf_1,...,\albf_t)+ \Rset_+^s.$$
A useful property of $NP(I)$ is that it is a pointed polyhedron, that is $NP(I)$ contains no non-trivial linear subspace of $\Rset^s$. Hence each minimal face of $NP(I)$ consists of just one point  which is called a vertex of $NP(I)$. The set of vertices of $NP(I)$, denoted by $V(I)$, is uniquely defined by $NP(I)$ and is a subset of $E(G(I))$. The decomposition theorem of polyhedra says that
\begin{equation}\label{IC4}
NP(I) = \conv (V(I)) + \Rset_+^s,
\end{equation}
(see, e.g., (29) on p. 106 in \cite{Sch}). Algebraically, $V(I)$ is the minimal subset of $E(G(I))$ satisfying the equation (\ref{IC4}). In the sequel, the unit vectors of $\Rset^s$ are denoted by $\ebf_1,...,\ebf_s$. 

\begin{exm} \label{Ex1} Let $I = (X_1^2, X_2^3, X_1X_2^2X_3)$. Since
$$(1,2,3) = \frac{1}{2}(2,0,0) + \frac{1}{2}(0,3,0) + \frac{1}{2}\ebf_2 + \ebf_3,$$
$V(I) = \{(2,0,0), (0,3,0)\}$, while $E(G(I) ) = \{ (2,0,0), (0,3,0), (1,2,1)\}$. The  presentation of $(1,2,3)$ in $\conv (V(I)) + \Rset_+^s$ is not unique. For an example, here is another presentation:
$$(1,2,3) = \frac{1}{3}(2,0,0) + \frac{2}{3}(0,3,0) + \frac{1}{3}\ebf_1 + \ebf_3.$$
\end{exm}

The set $V(I)$ was used in \cite{H3} to study the maximal generating degrees of the integral closure $\Inba$ of $I^n$. We set
$$\delta(I) = \max\{ |\vbf| |\ \vbf \in V(I) \} .$$
In \cite{H3} it is shown that $d(\Inba) = \delta(I) n + c$ for all $n\gg 0$, where $c\in \Nset$ is a  constant. We now show that like the case of integral closures, this number $\delta(I)$ is exactly $p(I)$.
 
  Since  $NP(I^n) = nNP(I)$, the set of vertices of $NP(I^n)$ is the set of  $n$-multiples of vertices of $V(I)$, i.e. $V(I^n) = nV(I) $. On the other hand, by the definition,   $ V(I^n) \subseteq E(G(I^n))$. Hence
  
\begin{lem} \label{MG1} For all $n>0$, $d(I^n) \geq \delta (I) n$.
\end{lem}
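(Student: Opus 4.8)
The plan is to exploit the two facts assembled in the paragraph immediately preceding the statement: the scaling identity $NP(I^n) = n\,NP(I)$ and its consequence $V(I^n) = nV(I)$, combined with the inclusion $V(I^n) \subseteq E(G(I^n))$ coming from the definition of the vertex set. The underlying idea is that a vertex of $NP(I)$ of maximal coordinate sum, after dilation by $n$, becomes a genuine minimal generator of $I^n$ of exactly the predicted degree, so that $\delta(I)n$ is forced to appear among the degrees of generators of $I^n$.

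Concretely, first I would choose a vertex $\vbf \in V(I)$ achieving the maximum, so that $|\vbf| = \delta(I)$. The dilation $\xbf \mapsto n\xbf$ is a linear automorphism of $\Rset^s$ carrying $NP(I)$ bijectively onto $NP(I^n) = n\,NP(I)$ and preserving the face lattice; hence it sends the vertex $\vbf$ to a vertex $n\vbf$ of $NP(I^n)$, which is exactly the assertion $V(I^n) = nV(I)$. Thus $n\vbf \in V(I^n)$. Since $V(I^n) \subseteq E(G(I^n))$, the point $n\vbf$ is the exponent vector of a minimal generator $\Xbf^{n\vbf}$ of $I^n$. Its degree is $|n\vbf| = n|\vbf| = \delta(I)n$, and because $d(I^n)$ is by definition the maximal degree of a minimal generator of $I^n$, we conclude $d(I^n) \geq \delta(I)n$, as required.

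I do not anticipate any genuine obstacle: the substantive content, namely $NP(I^n) = n\,NP(I)$ (the Minkowski sum of $n$ copies of a convex set coincides with its $n$-fold dilation) and the resulting $V(I^n) = nV(I)$, has already been recorded in the text, so the lemma reduces to reading off the degree of a single explicit generator. The only point deserving a line of justification is that scaling a pointed polyhedron by a positive integer preserves which points are its vertices, and this is immediate from the linearity of the dilation, since extreme points are carried to extreme points under an affine isomorphism.
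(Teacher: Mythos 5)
Your proposal is correct and follows exactly the route of the paper, whose entire proof is the paragraph preceding the lemma: $NP(I^n)=nNP(I)$ gives $V(I^n)=nV(I)$, and since $V(I^n)\subseteq E(G(I^n))$, the $n$-fold dilate of a vertex of maximal length is a minimal generator of $I^n$ of degree $\delta(I)n$. Your added remark that dilation by $n$ is an affine isomorphism preserving extreme points is the same (correct) justification the paper leaves implicit.
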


We need the following technical result from Linear Programming.

\begin{lem} \label{LP} Let $U$ be a finite subset of non-negative integral vectors in $\Nset^s$ of lengths at most $\delta$ and $\vbf \in \Nset^s$.  Assume that $\vbf \in \conv (U) + \Rset^s_+$. Then one can find a positive integer  $N\leq s^{s/2}\delta^s$, non-negative integers  $\alpha_1,...,\alpha_m, \beta_1,...,\beta_s$  and vectors $\ubf_1,...,\ubf_m\in U$ such that $\sum \alpha_i = N$ and 
$$N \vbf = \sum_{i=1}^m \alpha_i \ubf_i +  \sum_{j=1}^s \beta_j \ebf_j.$$
\end{lem}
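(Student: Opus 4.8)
The plan is to reformulate the membership $\vbf \in \conv(U) + \Rset^s_+$ as a conical (non-negative) combination in one higher dimension, extract a basic representation via Carath\'eodory's theorem, and then read off the common denominator $N$ as a determinant that Hadamard's inequality controls.

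First I would homogenize. Setting $W = \{(\ubf, 1) : \ubf \in U\} \cup \{(\ebf_j, 0): 1 \le j \le s\} \subset \Rset^{s+1}$, one checks that $\vbf \in \conv(U)+\Rset^s_+$ if and only if $(\vbf,1) \in \cone(W)$: a non-negative combination $\sum_i \lambda_i(\ubf_i,1) + \sum_j \mu_j(\ebf_j,0)$ equals $(\vbf,1)$ exactly when $\sum_i \lambda_i = 1$ and $\sum_i\lambda_i\ubf_i + \sum_j \mu_j \ebf_j = \vbf$. Since the $s$ vectors $(\ebf_j,0)$ together with any single $(\ubf,1)$ already span $\Rset^{s+1}$, the set $W$ spans, so by Carath\'eodory's theorem for cones I may write $(\vbf,1) = \sum_{l=1}^{s+1} c_l w_l$ with all $c_l \ge 0$ and $w_1, \dots, w_{s+1} \in W$ forming a basis $B$ of $\Rset^{s+1}$ (padding a shorter linearly independent representation by further elements of $W$ carrying coefficient $0$).

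Next I would solve by Cramer's rule. With $N := |\det B|$, a positive integer since $B$ is invertible with integer entries, each $Nc_l = \pm \det B_l$ is a non-negative integer, where $B_l$ replaces the $l$-th column of $B$ by $(\vbf,1)$. Reading the last coordinate of the identity $N(\vbf,1) = \sum_l (Nc_l) w_l$ gives $N = \sum_i \alpha_i$, where $\alpha_i := Nc_l$ are the coefficients of the columns $w_l = (\ubf_i,1)$, while $\beta_j := Nc_l$ are those of the columns $w_l = (\ebf_j,0)$; the first $s$ coordinates then yield $N\vbf = \sum_i \alpha_i \ubf_i + \sum_j \beta_j \ebf_j$, exactly as required.

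It remains to bound $N = |\det B|$, which I expect to be the only delicate point. Let $p$ be the number of columns of $B$ of type $(\ubf,1)$; since the last row of $B$ cannot vanish, $p\ge 1$, and the remaining $q = s+1-p$ columns are vectors $(\ebf_j,0)$ with pairwise distinct $j$ by linear independence. Expanding the determinant along these $q$ unit-vector columns reduces $|\det B|$ to the absolute value of a $p\times p$ minor $B'$ whose last row is $(1,\dots,1)$ and whose other $p-1$ rows carry coordinates of the $\ubf_i$, hence entries in $\{0,\dots,\delta\}$. Subtracting the first column of $B'$ from the remaining ones turns that row into $(1,0,\dots,0)$, and expanding along it leaves a $(p-1)\times(p-1)$ matrix $B''$ whose entries are differences of coordinates of the $\ubf_i$ and so lie in $[-\delta,\delta]$. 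Hadamard's inequality then gives $N = |\det B''| \le (\sqrt{p-1}\,\delta)^{p-1} = (p-1)^{(p-1)/2}\delta^{p-1} \le s^{s/2}\delta^s$, using $p-1\le s$ and (discarding the trivial case $U=\{0\}$) $\delta \ge 1$. The two determinant reductions that strip away the unit-vector columns and the all-ones row, so that Hadamard applies to a matrix of size $p-1\le s$ with entries bounded by $\delta$, are the heart of the estimate.
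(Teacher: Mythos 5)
Your proof is correct, but it reaches the bound by a genuinely different mechanism than the paper. The paper stays inhomogeneous: after affine Carath\'eodory it eliminates $\alpha'_m$ via $x_m = 1-(x_1+\cdots+x_{m-1})$, views the resulting feasibility system as defining a pointed polyhedron $\Pcal \subset \Rset_+^{m-1}$, picks a vertex, and invokes the Hoffman--Kruskal characterization of vertices (as unique solutions of square subsystems of the constraints) to produce the square integer matrix $A$, with entries $u_{ij}-u_{mj}$, $0$ or $1$, to which Cramer and Hadamard are applied. You instead homogenize to the cone over $W=\{(\ubf,1)\}\cup\{(\ebf_j,0)\}$ in $\Rset^{s+1}$, use conical Carath\'eodory plus basis completion inside the spanning set $W$, and then strip the basis matrix down by hand --- Laplace expansion along the unit-vector columns, then a column subtraction that converts the all-ones row into $(1,0,\dots,0)$ --- arriving at a $(p-1)\times(p-1)$ matrix of coordinate differences, the exact analogue of the paper's $A$. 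So your route replaces the LP vertex theory (polyhedron $\Pcal$, Hoffman--Kruskal) with elementary linear algebra, which makes the argument more self-contained; the paper's version buys brevity by citing a standard result, and its variable elimination $x_m=1-\sum x_i$ plays precisely the role of your column subtraction. Two small points you handle correctly and should keep explicit: padding to a basis is legitimate because $W$ spans $\Rset^{s+1}$ (any $(\ubf,1)$ together with the $(\ebf_j,0)$ suffices), and the final inequality $(p-1)^{(p-1)/2}\delta^{p-1}\le s^{s/2}\delta^s$ needs $\delta\ge 1$ --- an assumption the paper's own estimate $(m-1)^{(m-1)/2}\delta^{m-1}\le s^{s/2}\delta^s$ silently uses as well, and which is harmless in the intended application since a non-zero proper monomial ideal has $\delta(I)\ge 1$.
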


\begin{proof} By Carath\'eodory's Theorem, one can find $m\leq s+1$ points $\ubf_1,...,\ubf_m\in U$ and non-negative numbers  $\alpha'_1,...,\alpha'_m, \beta'_1,...,\beta'_s$  such that $\sum \alpha'_i = 1$,  and 
\begin{equation}\label{ELP0}
 \vbf = \sum_{i=1}^m \alpha'_i \ubf_i +  \sum_{j=1}^s \beta'_j \ebf_j.
\end{equation}
Consider the following system of linear constrains
\begin{equation} \label{ELP1}
 \begin{cases}
(u_{11}- u_{m1})x_1 + \cdots + (u_{(m-1)1} - u_{m1})x_{m-1} + y_1 = v_1,\\
\cdots  \\
(u_{1s}- u_{ms})x_1 + \cdots + (u_{(m-1)s} - u_{ms})x_{m-1} + y_s = v_s,\\
 x_1 + \cdots + x_{m-1} \leq  1,\\
x_1,...,x_{m-1}, y_1,...,y_s \geq  0.
\end{cases}
\end{equation}
Then $\alpha'_1,...,\alpha'_{m-1}, \beta'_1,...,\beta'_s$ is a feasible solution of the system (\ref{ELP1}). Note that $\alpha'_1,...,\alpha'_{m-1}$ is a feasible solution of the following system of linear constrains of $m-1$ variables:
\begin{equation} \label{ELP2}
 \begin{cases}
(u_{11}- u_{m1})x_1 + \cdots + (u_{(m-1)1} - u_{m1})x_{m-1}  \leq v_1,\\
\cdots  \\
(u_{1s}- u_{ms})x_1 + \cdots + (u_{(m-1)s} - u_{ms})x_{m-1}  \leq  v_s,\\
 x_1 + \cdots + x_{m-1} \leq  1,\\
x_1,...,x_{m-1} \geq  0.
\end{cases}
\end{equation}
Conversely, if $\alpha'_1,...,\alpha'_{m-1}$ is a feasible solution of (\ref{ELP2}), then $\alpha'_1,...,\alpha'_{m-1}, \beta'_1,...,\beta'_s$ is a solution of (\ref{ELP1}), where $\beta'_j = v_j - [(u_{1j}- u_{mj})\alpha'_1 + \cdots + (u_{(m-1)j }- u_{mj})\alpha'_{m-1} ]$, and the numbers $\alpha'_1,...,\alpha'_{m-1}, \alpha'_m, \beta'_1,...,\beta'_s$ satisfy  the relation (\ref{ELP0}), where  $\alpha'_m = 1- (\alpha'_1+\cdots + \alpha'_{m-1})$. 

Let $\Pcal\subset \Rset_+^{m-1}$ be the set of  solutions of (\ref{ELP2}). By virtue of (\ref{ELP0}), $\Pcal$ is a non-empty pointed polyhedron. Let $\albf^* = (\alpha^*_1,...,\alpha^*_{m-1})$ be a vertex of $\Pcal$.  By  a result of Hoffman and Kruskal \cite{HK} (see also, e.g., \cite[Theorem 8.4]{Sch}), one may assume that $\albf^*$ is the unique solution of a system of $m-1$ equations $A\xbf = \bbf$ for some subsystem $A\xbf \leq \bbf$ of the system of constrains (\ref{ELP2}). Let $N$ be the determinant of $A$. Then, by Cramer's rule, $\alpha^*_i = \alpha_i/N$, where $\alpha_i \in \Nset$.
 Since each entry of $A$ is either $u_{ij} - u_{mj}$, or $1$ or $0$, its absolute value is at most $\delta$. Hence, by Hadamard's inequality we get
 $$N\leq (m-1)^{(m-1)/2} \delta^{m-1} \leq s^{s/2}\delta^s.$$
 Then 
 $$ \begin{array}{ll}
\alpha^*_m  &= 1 - (\alpha^*_1+\cdots + \alpha^*_{m-1} ) = \alpha_m/N,\\
 \beta^*_j & = v_j - [(u_{1j}- u_{mj})\alpha^*_1 + \cdots + (u_{(m-1)j} - u_{mj})\alpha^*_{m-1} ]= \beta_j/N , \ (j=1,...,s),
\end{array}$$
for some non-negative integers $\alpha_m, \beta_1,...,\beta_s$. By the above argument, we get
$$\vbf =  \sum_{i=1}^m \alpha^*_i \ubf_i +  \sum_{j=1}^s \beta^*_j \ebf_j,$$
or equivalently,
$$N\vbf =  \sum_{i=1}^m \alpha_i \ubf_i +  \sum_{j=1}^s \beta_j \ebf_j.$$
\end{proof}

We can now state and prove the main result of this section.

\begin{thm} \label{Lead} Let $I$ be a monomial ideal. Let $q$ be the number of minimal generators from $G(I)$ of degree bigger than $\delta(I)$. Then:
\begin{itemize}
\item[(i)]  $p(I) = \delta(I)$. That means there is a non-negative integer $q_0(I)$ such that $d(I^n) = \delta(I) n + q_0(I)$ for all $n\gg 0.$
\item[(ii)] $r(I) \leq  q s^{s/2}\delta(I)^s - q$.
\item[(iii)] $\epsilon(I;n) \leq  q [ s^{s/2}\delta(I)^s - 1] [d(I)- \delta(I)]$ for all $n\geq 1$.
\end{itemize} 
\end{thm}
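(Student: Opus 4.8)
I want to prove Theorem~\ref{Lead}, whose three parts are tightly linked: part (i) identifies the leading coefficient $p(I)$ as the combinatorial quantity $\delta(I)$, and parts (ii), (iii) give effective bounds on the reduction number and the degree excess function. The natural strategy is to establish (i) first, then read off (ii) from a quantitative reduction statement and (iii) as a corollary via Lemma~\ref{G1}(ii). The key technical input is Lemma~\ref{LP}, which lets me clear denominators in a Carath\'eodory representation while keeping the multiplier $N$ bounded by $s^{s/2}\delta(I)^s$.

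\textbf{Step 1: Proving $p(I)=\delta(I)$.} Lemma~\ref{MG1} already gives $d(I^n)\geq \delta(I)n$, so by Lemma~\ref{CHTK}(i) it suffices to show $d(I^n)\leq \delta(I)n + (\text{const})$, which forces the leading coefficient to be exactly $\delta(I)$. Let $\Xbf^{\vbf}$ be any minimal generator of $I^n$. Then $\vbf\in E(I^n)=nE(I)$, so $\vbf/n\in NP(I)=\conv(V(I))+\Rset_+^s$ by the decomposition theorem~(\ref{IC4}), hence $\vbf\in\conv(nV(I))+\Rset_+^s$. I apply Lemma~\ref{LP} with $U=V(I^n)=nV(I)$; but since the lengths of vectors in $nV(I)$ are $n\delta(I)$, I should instead apply it cleverly. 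The cleaner route is to show directly that any $\vbf$ with $\vbf/n\in NP(I)$ admits a representation with controlled ``excess.'' Concretely, write $\vbf = \sum \alpha_i \ubf_i + \sum \beta_j \ebf_j$ with $\ubf_i\in V(I)$, $\sum\alpha_i = n$ (this is possible since $\vbf/n\in\conv(V(I))+\Rset_+^s$ scaled by $n$, after clearing denominators). Then $|\vbf| = \sum\alpha_i|\ubf_i| + \sum\beta_j \leq \delta(I)\sum\alpha_i + \sum\beta_j = \delta(I)n + \sum\beta_j$, so the excess is governed entirely by $\sum\beta_j$, and I must bound this for \emph{minimal} generators.

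\textbf{Step 2: Bounding the reduction number (ii).} This is where Lemma~\ref{LP} does its real work and is the crux. Recall $J=(f_i\mid \deg f_i\leq p(I))$ is generated by monomials whose exponents are exactly the length-$\delta(I)$ vectors of $G(I)$, while the $q$ ``excess'' generators have degree $>\delta(I)$. To show $r(I)\leq qs^{s/2}\delta(I)^s - q$, I want to show that for $n$ beyond this bound, $I^n = JI^{n-1}$, i.e.\ every minimal generator of $I^n$ is divisible by some generator of $J$. Take a minimal generator $\Xbf^{\vbf}$ of $I^n$. Since $\vbf\in nNP(I)$ and the vertices $V(I)$ all have length $\leq\delta(I)$, I apply Lemma~\ref{LP} to $U=V(I)$ and the vector $\vbf$ (noting $\vbf\in\conv(V(I))+\Rset_+^s$ trivially holds after suitable scaling): there is $N\leq s^{s/2}\delta(I)^s$ with $N\vbf = \sum\alpha_i\ubf_i + \sum\beta_j\ebf_j$, $\sum\alpha_i = N$, all $\ubf_i\in V(I)$. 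This exhibits $(\Xbf^{\vbf})^N$ as divisible by a product of $N$ generators \emph{of} $J$. The point is then a counting/pigeonhole argument: writing $\vbf$ as an integer combination of $V(I)$-generators and the excess generators, if too many excess generators appear (more than roughly $s^{s/2}\delta(I)^s$ per ``batch''), then Lemma~\ref{LP} forces a representation using only the short generators, contradicting minimality unless $n$ is small. I expect this pigeonhole step --- translating the bound on $N$ into a bound on how many excess-generator factors a minimal generator can carry --- to be the main obstacle, since it requires carefully tracking which generators are ``essential'' and using that $(\Xbf^{\vbf})^N\in J^N$ to peel off a $J$-factor once $n > qs^{s/2}\delta(I)^s - q$.

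\textbf{Step 3: The degree excess bound (iii).} Once (i) and (ii) are in hand, part (iii) is immediate from Lemma~\ref{G1}(ii), which states $\epsilon(I;n)\leq \min\{n,r(I)\}(d(I)-p(I))$. Substituting $p(I)=\delta(I)$ from (i) and the bound $r(I)\leq qs^{s/2}\delta(I)^s - q = q[s^{s/2}\delta(I)^s-1]$ from (ii) gives
\[
\epsilon(I;n)\leq r(I)\,(d(I)-\delta(I)) \leq q[s^{s/2}\delta(I)^s-1](d(I)-\delta(I))
\]
for all $n\geq 1$, which is exactly the claimed inequality. So the entire weight of the theorem rests on Step~2; Steps~1 and~3 are short deductions from Lemma~\ref{MG1}, the decomposition~(\ref{IC4}), and Lemma~\ref{G1}(ii), while the quantitative reduction bound is the genuinely new content requiring the Hoffman--Kruskal/Hadamard estimate from Lemma~\ref{LP}.
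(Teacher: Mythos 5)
Your Steps 1 and 3 match the paper's structure: (iii) is exactly Lemma \ref{G1}(ii) combined with (i) and (ii), and the lower bound $d(I^n)\geq \delta(I)n$ is Lemma \ref{MG1}. (Minor slip on the way: $E(I^n)\neq nE(I)$; what scales is the Newton polyhedron, $NP(I^n)=nNP(I)$.) But the whole theorem rests on your Step 2, and there you have a sketch of a hope rather than a proof --- you concede the pigeonhole step is ``the main obstacle,'' and the route you sketch would in fact fail. Applying Lemma \ref{LP} to the exponent vector $\vbf$ of a minimal generator of $I^n$ with $U=V(I)$ gives $N\vbf=\sum_i\alpha_i\ubf_i+\sum_j\beta_j\ebf_j$ with $\sum_i\alpha_i=N$, i.e.\ $(\Xbf^{\vbf})^N\in\Jcal^{N}$ where $\Jcal$ is generated by the vertex monomials; this only says $\Xbf^{\vbf}$ is integral over $\Jcal$. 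You cannot ``peel off a $J$-factor'' from $\Xbf^{\vbf}$ itself out of knowledge about its $N$-th power: $m^N\in J^NI^{k}$ does not imply $m\in JI^{k}$ (already $(X_1X_2)^2\in (X_1^2,X_2^2)^2$ while $X_1X_2\notin(X_1^2,X_2^2)$). That is precisely the gap between an ideal and its integral closure, and the difficulty of the theorem is that powers of monomial ideals are not integrally closed; your Step 1 silently defers the same missing bound on $\sum\beta_j$ to this non-existent argument.

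The paper applies Lemma \ref{LP} in the opposite direction: to each of the $q$ high-degree generators $\vbf_j$ (those with $|\vbf_j|>\delta(I)$) individually, each of which lies in $\conv(V(I))+\Rset_+^s$. This produces integers $N_j\leq s^{s/2}\delta(I)^s$ with the \emph{exact} monomial membership $\Xbf^{N_j\vbf_j}\in\Jcal^{N_j}$, because the LP lemma gives an integer identity $N_j\vbf_j=\sum_i\alpha_{j,i}\vbf_i+\sum_k\gamma_{j,k}\ebf_k$ with $\sum_i\alpha_{j,i}=N_j$ --- genuine divisibility, no integral-closure issue. The pigeonhole is then trivial: a monomial of $I^{n+1}$ lying outside $JI^n$ must be divisible by a product $\prod_j\Xbf^{n_j\vbf_j}$ of excess generators alone with $\sum_j n_j=n+1$; if some $n_k\geq N_k$, then $\Xbf^{n_k\vbf_k}\in\Jcal^{N_k}I^{n_k-N_k}$ forces membership in $JI^n$, a contradiction, so every $n_j\leq N_j-1$ and $n+1\leq N:=\sum_j(N_j-1)\leq q\bigl(s^{s/2}\delta(I)^s-1\bigr)$. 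Hence $I^{n+1}=JI^n$ for all $n\geq N$, giving (ii); and then $d(I^n)\leq d(J^{n-N})+d(I^N)\leq \delta(I)n+N(d(I)-\delta(I))$, which together with Lemma \ref{MG1} and (\ref{EKo}) yields (i) --- note the paper derives (i) \emph{from} the reduction statement, the reverse of your intended order, and this is forced. To repair your write-up, replace the vague batching argument by this per-generator application of Lemma \ref{LP}.
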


\begin{proof}   Set $\delta = \delta(I)$.
Assume that $I=(\Xbf^{\vbf_i}|\ i=1,...,p, .., t)$,  such that $V(I) =\{\vbf_1,...,\vbf_p\}$, $|\vbf_i| \leq \delta$ for all $i\leq t-q$ and $|\vbf_i| > \delta$ for all $i\geq t-q+1$, where $q\leq t-p$.
Let 
$$ \Jcal  := (\Xbf^{\vbf_1},..., \Xbf^{\vbf_p}) \subseteq 
J :=  (\Xbf^{\vbf_1},..., \Xbf^{\vbf_{t-q}}). $$

For each $j=t-q+1, ..., t$, $\vbf_j \in \conv(V(I)) + \Rset_+^s$. By Lemma \ref{LP}, there is a positive integer $N_j \leq s^{s/2}\delta^s$ and non-negative integers $\alpha_{j,i}, \gamma_{j,k}$ such that $\sum_{i=1}^p \alpha_{j,i} = N_j$ and
$$N_j \vbf_j = \sum_{i=1}^p \alpha_{j,i}\vbf_i + \sum \gamma_{j,k}\ebf_k.$$
Then $\Xbf^{N_j\vbf_j} \in \Jcal^{N_j}$. If $q=0$, let $N=0$. Otherwise, 
let 
\begin{equation}\label{ELead1} N = (N_{t-q+1}-1) + \cdots + (N_t - 1) \leq q (s^{s/2}  \delta^s- 1).
\end{equation}
Then 
\begin{equation}\label{ELead}
I^{n+1} =J  I^n   \ \text{ for all} \ n \geq N.
\end{equation}
Indeed, assume by contrary  that there is $m \in I^{n+1}\setminus J  I^n$. Then $m=  \prod_{j=t-q+1}^t\Xbf^{n_j\vbf_j}$ for some non-negative $n_j$ with $\sum n_j = n+1$. Assume $n_k \ge N_k$ for some $k$. Then 
$$\Xbf^{n_k\vbf_k} = \Xbf^{N_k\vbf_k} \Xbf^{(n_k-N_k)\vbf_k} \in \Jcal^{N_k}I^{n_k-N_k} \subseteq J^{N_k}I^{n_k-N_k} .$$
 Hence $m \in J^{N_k} I^{n+1 -N_k} \subseteq J I^n$, a contradiction. So all $n_j\leq N_j -1$ and $n+1 \leq N$, which contradicts to the condition $n\geq N$. Hence (\ref{ELead}) holds true. 
Now for all $n\geq N$ we have 
$$d(I^n) \leq d(J ^{n-N}) + d(I^N) \leq \delta  (n-N) + N d(I) = \delta  n + N(d(I)  - \delta ).$$
On the other hand, by Lemma \ref{MG1}, $d(I^n) \geq \delta n$. From (\ref{EKo}), it follows that   $p(I) = \delta $, and  $J$ is exactly the reduction of $I$ in the Kodiyalam's construction.  By virtue of Lemma \ref{CHTK}, $d(I^n) = \delta n + q_0(I)$ for some $q_0(I)$ and all $n\gg 0$. This shows (i). The statement (ii) follows from (\ref{ELead})  and (\ref{ELead1}), and (iii) follows from (ii) and Lemma \ref{G1}(ii).
\end{proof}

In \cite{H3}, we can give a number $n_0$ in terms of $s$ and $\delta(I)$ such that $d(\Inba) = \delta(I) n + c(I)$ for some $c(I)\geq 0$ and all $n\geq n_0$. Unfortunately we cannot solve the similar problem for $\gstab(I)$.  Lemma \ref{G4} already shows that in the worst case,  $\gstab(I)$ must depend on $\delta(I)$. The following example not only shows that the bound on the reduction number in the above theorem is close to be optimal, but also shows that in the worst case, the constant value of $\epsilon(I; n)$ as well as the index of stability $\gstab(I)$ are at least exponential functions of $s$, namely  $\epsilon(I; n), \gstab(I) >  (\delta(I)-1)^{s-3}$.

\begin{exm} \label{ExRed}
Given $s\geq 3$ and $\delta \geq s + 1$. Set
$$\begin{array}{ll}
\vbf_1 &= (\delta-1, 1, 0, 0, \cdots, 0, 0),\\
\vbf_2 & = (0, \delta-1, 1, 0, \cdots , 0, 0),\\
& \cdots\\
\vbf_{s-1} & = (0, 0, 0, 0, \cdots, \delta-1, 1),\\
\vbf_s &= (1,0,0,0,\cdots,0, \delta -1),
\end{array}$$
and $$\vbf = (\delta - s + 1,1,...,1).$$
Let $a$ be a positive integer. Consider the following  ideal in $s+1$ variables:
$$I = (M_1:=\Xbf^{\vbf_1}, ..., M_s:= \Xbf^{\vbf_s}, M:= \Xbf^{\vbf}Y^a ) \subset K[\Xbf, Y].$$
Then $V(I) = \{ (\vbf_1,0), ...,(\vbf_s,0)\}$ and the reduction $J$ of $I$ in the Kodiyalam's construction is $J= (\Xbf^{\vbf_1}, ..., \Xbf^{\vbf_s})$. Hence $\delta(I) = \delta$. 
\end{exm}

\begin{prop} \label{Red} Let $I$ be the monomial ideal given in Example \ref{ExRed}. Then:
\begin{itemize}
\item[(i)]  $(\delta - 1)^{s-2} < r(I) \leq (\delta -1)^s $;
\item[(ii)] $\epsilon(I; n) = an$ for all $n\leq r (I) -1$ and $\epsilon(I; n) = r(I) a > (\delta - 1)^{s-2} $ for all $n\geq r(I) $. In particular $\gstab(I) = r(I)  > (\delta - 1)^{s-2}$. 
\end{itemize} 
\end{prop}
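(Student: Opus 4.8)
The plan is to pin down $r(I)$ tightly and then read off everything else. Set $\lambda:=\delta-1$, so $\lambda\ge s$, and recall from Example~\ref{ExRed} that $J=(M_1,\dots,M_s)$ is the Kodiyalam reduction of $I=(J,M)$, whence $p(I)=\delta$. By Remark~\ref{EG2}, $r(I)=t_0-1$ where $t_0=\min\{t\ge1:\ M^t\in J^t\}$. Since no generator of $J$ involves $Y$, we have $M^t=\Xbf^{t\vbf}Y^{ta}\in J^t$ iff $\Xbf^{t\vbf}\in J^t$, i.e.\ iff there are $c_1,\dots,c_s\in\Nset$ with $c_1+\cdots+c_s=t$ and $\sum_i c_i\vbf_i\le t\vbf$ coordinatewise. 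By the cyclic shape of the $\vbf_i$ this system reads
$$\lambda c_k+c_{k-1}\le t\quad(2\le k\le s),\qquad \lambda c_1+c_s\le(\lambda-s+2)t,$$
with indices mod $s$ and $\sum_i c_i=t$.

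The first — and decisive — step is to note that every feasible solution must satisfy all these inequalities as equalities: summing those for $2\le k\le s$ and using $\sum_i c_i=t$ gives $\lambda c_1+c_s\ge(\lambda-s+2)t$, which combined with the last inequality forces equality throughout. Thus $c_{k-1}=t-\lambda c_k$ for $2\le k\le s$; solving this recursion downward from $c_s$ writes every $c_k$ as an \emph{integer} combination of $t$ and $c_s$, so the unique solution is integral exactly when $c_s\in\Zset$. Substituting the recursion into $\lambda c_1+c_s=(\lambda-s+2)t$ gives $c_s=\nu t/\Lambda$, where $\Lambda=\lambda^s+(-1)^{s-1}$ and $\nu=Q+(-1)^{s-1}(\lambda-s+1)$ with $Q:=Q_{s+1}(\lambda)=\frac{\lambda^s-(-1)^s}{\lambda+1}=\Lambda/(\lambda+1)$. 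Non-negativity of the $c_k$ is automatic, being the membership $\vbf\in\conv(\vbf_1,\dots,\vbf_s)+\Rset_+^s$ already noted in Example~\ref{ExRed}. Hence $M^t\in J^t$ iff $t_0\mid t$ with $t_0=\Lambda/\gcd(\nu,\Lambda)$, and $r(I)=t_0-1$.

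The heart of the matter is to bound $g:=\gcd(\nu,\Lambda)$. Both $\Lambda=(\lambda+1)Q$ and $\nu$ are divisible by $\lambda+1$ (evaluate $\nu$ at $\lambda=-1$), so $(\lambda+1)\mid g$; and since $(\lambda+1)\nu-\Lambda=(\lambda+1)(-1)^{s-1}(\lambda-s+1)$ we get $g\mid(\lambda+1)(\lambda-s+1)$. Writing $g=(\lambda+1)h$ with $h\mid(\lambda-s+1)$ gives $t_0=Q/h$, hence $\tfrac{Q}{\lambda-s+1}\le t_0\le Q$. Now $\lambda^{s-2}(\lambda-1)\le Q\le\lambda^{s-1}$ and $\lambda-s+1\le\lambda-2$, so $t_0\le\lambda^{s-1}\le\lambda^s$ and $t_0\ge \frac{\lambda^{s-2}(\lambda-1)}{\lambda-2}>\lambda^{s-2}+1$ (using $s\ge3$, $\lambda\ge s$). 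This yields (i): $(\delta-1)^{s-2}<r(I)\le(\delta-1)^s$ (indeed $r(I)<(\delta-1)^{s-1}$).

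Finally (ii). For $n\le r(I)$ the monomial $M^n$ is a minimal generator of $I^n$: were $M^n/Y$ or $M^n/X_j$ in $I^n$, stripping the $M$-factors from a representing product would force $\Xbf^{m\vbf}\in J^m$ for some $1\le m\le n\le r(I)<t_0$, which is impossible. Hence $d(I^n)=n\,d(I)=n(\delta+a)$ and $\epsilon(I;n)=an$. For $n\ge r(I)$, the identity $I^n=J^{\,n-r(I)}I^{r(I)}$ together with $d(J)=\delta$ gives $\epsilon(I;n)\le a\,r(I)$, while $M^{r(I)}M_1^{\,n-r(I)}$ is a minimal generator of degree $\delta n+a\,r(I)$ by the same exponent bookkeeping as in Lemma~\ref{G2}(iii); thus $\epsilon(I;n)=a\,r(I)$ for all $n\ge r(I)$, so $\gstab(I)=r(I)$ and $a\,r(I)\ge r(I)>(\delta-1)^{s-2}$. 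The main obstacle is the two middle steps: recognising that feasibility forces equalities, and then extracting the divisor bound $g\mid(\lambda+1)(\lambda-s+1)$, which is exactly what traps $t_0$ at order $\lambda^{s-1}$ and produces the exponential growth.
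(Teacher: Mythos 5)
Your proposal is correct, and while its skeleton matches the paper's proof (reduce membership $M^t\in J^t$ to the linear feasibility system for $\Xbf^{t\vbf}\in J^t$; use equality of total degrees $|\vbf|=|\vbf_1|=\cdots=|\vbf_s|=\delta$ to kill all slack; exploit uniqueness of the rational solution and its denominator to trap the minimal such $t$; then the same minimal-generator bookkeeping for (ii) combined with Lemma \ref{G1}(ii)), your execution of the central arithmetic step is genuinely different and in places sharper. The paper solves the system (\ref{EExRed1}) by Cramer's rule, evaluating $D=(\delta-1)^s+(-1)^{s-1}$ and every numerator $D_i$ by repeated Laplace expansions, then bounds $\gcd(D,D_s)\le(\delta-1)(\delta-2)$ to get $r(I)\ge n_2-1>(\delta-1)^{s-2}$, with the upper bound $r(I)\le(\delta-1)^s$ coming from exhibiting $M^D\in J^D$. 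You instead solve by back-substitution $c_{k-1}=t-\lambda c_k$, which shows at once that integrality of the whole solution is governed by $c_s$ alone, and your identity $(\lambda+1)\nu-\Lambda=(-1)^{s-1}(\lambda+1)(\lambda-s+1)$ replaces the paper's gcd estimate by the exact divisor statement $g=(\lambda+1)h$ with $h\mid \lambda-s+1$. (Your $\nu$ is precisely the paper's $D_s$ and $\Lambda=D$, so the two computations agree.) This buys an exact characterization $M^t\in J^t\iff t_0\mid t$ with $t_0=Q/h$ — the paper only proves $t\ge n_2$ and exhibits one multiple — and the strictly sharper upper bound $r(I)<(\delta-1)^{s-1}$, which of course implies the stated $r(I)\le(\delta-1)^s$.

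Two spots are thinner than they should be. First, you declare non-negativity of the $c_k$ ``automatic'' by citing the membership $\vbf\in\conv(\vbf_1,\ldots,\vbf_s)+\Rset_+^s$ asserted in Example \ref{ExRed}; but the paper treats exactly this assertion as part of what must be proved — its entire Step 1 (showing all Cramer numerators $D_i$ are positive) exists to justify $V(I)=\{(\vbf_1,0),\ldots,(\vbf_s,0)\}$, that $J$ is the Kodiyalam reduction, and hence that $r(I)$ is even defined and $p(I)=\delta$. Your framework can absorb this cheaply: from $\alpha_s=\frac{1}{\lambda+1}+\frac{(-1)^{s-1}(\lambda-s+1)}{(\lambda+1)Q}$ one gets $1-(\lambda+1)\alpha_s=(-1)^s(\lambda-s+1)/Q$, and the recursion shows $\alpha_{s-j}\ge 0$ reduces to $\lambda^j(\lambda-s+1)\le Q$ for $j\le s-2$, which holds since $Q\ge\lambda^{s-2}(\lambda-1)$ and $\lambda-s+1\le\lambda-2$ — but some such check must be written down, since were a coordinate negative, $M^t\notin J^t$ for all $t$ and the whole statement collapses. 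Second, in (ii) the phrase ``the same exponent bookkeeping as in Lemma \ref{G2}(iii)'' compresses the one step where non-negativity is used again: after stripping $M$-factors (the $Y$-exponent forces $t'<t$, total degree forces $m'\neq 1$ only then) and killing slack by the degree count, uniqueness of the solution gives $n_1-(n-r)=\alpha_1 r\ge 0$, whence $\Xbf^{r\vbf}\in J^r$, contradicting $r<t_0$; this is the paper's Step 3 and deserves to be spelled out, though it is routine within your setup.
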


\begin{proof} 

{\it Step 1:} To show $V(I) =  \{ (\vbf_1,0), ...,(\vbf_s,0)\}$.

 First, we find $\alpha_1,...,\alpha_s$ such that
\begin{equation}\label{EExRed1}
\alpha_1\vbf_1 + \cdots + \alpha_s \vbf_s = \vbf.
\end{equation}
Of course $(\alpha_1,...,\alpha_s)$ is a solution of the following system of equations
$$\begin{cases} \begin{array}{ccccccccccc}
(\delta-1) x_1 &  &   &  &   & & & +& x_s & =  &\delta - s +1,\\
x_1 &+ & (\delta - 1) x_2 &   &  & &  &   &    & =& 1,\\
 &  & x_2 & + & (\delta - 1) x_3 &   &   & &   & = &1, \\
 &   &  &   &   &   \cdots   &  &  &    &  &  \\
 & &  &   &  &    & x_{s-1} & +  & (\delta-1)  x_s  & = & 1.
\end{array}
\end{cases}$$
Using the Laplace expansion along the first row we can calculate the determinant of this system:
$$D = \begin{vmatrix}
\delta -1 & 0 & 0 & \cdots & 0 & 1\\
1 & \delta - 1& 0 & \cdots & 0 & 0\\
0 & 1 & \delta -1 & \cdots & 0 & 0\\
\vdots  & \vdots  & \vdots  & \ddots & \vdots & \vdots \\
0  & 0 & 0  & \cdots & 1  & \delta -1
\end{vmatrix} =  (\delta-1)^s + (-1)^{s-1} > 0.$$
So, the system (\ref{EExRed1}) has the unique solution. 

We show that all $\alpha_i > 0$. In order to compute the solution of (\ref{EExRed1}), let us first compute the following determinant of size $t\geq 2$:
$$B_{t,c}  = \begin{vmatrix}
c & \delta - 1& 0 & \cdots & 0 \\
1& 1 & \delta -1 & \cdots & 0 \\
\vdots  & \vdots  & \vdots  & \ddots & \vdots  \\
1  & 0 & 0  & \cdots & \delta - 1\\
1  & 0 & 0  & \cdots & 1  
\end{vmatrix} ,$$
where $c$ is any number. Using the Laplace expansion along the last row,  we get the recursion relation $B_{t,c}= (-1)^{t-1}(\delta - 1)^{t-1} + B_{(t-1),c}$. Hence
$$\begin{array}{ll}
B_{t,c} & = (-1)^{t-1}(\delta - 1)^{t-1} + \cdots + (-1)^2 (\delta - 1)^2 +  \begin{vmatrix}
c & \delta - 1\\ 1& 1\end{vmatrix} \\
&  =(-1)^{t-1} \frac{\delta - 1}{\delta} [ (\delta - 1)^{t-1} + (-1)^t] + c.
\end{array}$$
Denote by $D_i$ the determinant obtained from $D$ by replacing the $i$-th column by $(\delta -s + 1, 1,\cdots, 1)^T$. Then, using the Laplace expansion along the first row,  we now get 
$$\begin{array}{ll}
D_1 &= \begin{vmatrix}
\delta -s+1 & 0 & 0 & \cdots & 0 & 1\\
1 & \delta - 1& 0 & \cdots & 0 & 0\\
1 & 1 & \delta -1 & \cdots & 0 & 0\\
\vdots  & \vdots  & \vdots  & \ddots & \vdots & \vdots \\
1  & 0 & 0  & \cdots & 1  & \delta -1
\end{vmatrix} \\
& = (\delta-s+1) (\delta -1)^{s-1} + (-1)^{s-1}B_{(s-1),1} \\
& = (\delta-s+1) (\delta -1)^{s-1} + (-1)^{s-1}\{ (-1)^{s-2} \frac{\delta-1}{\delta}[ (\delta - 1)^{s-2} + (-1)^{s-1}] + 1\}.\\
& =  (\delta-s+1) (\delta -1)^{s-1}  - \frac{\delta-1}{\delta} [(\delta - 1)^{s-2} + (-1)^{s-1}] +  (-1)^{s-1}.
\end{array}$$
Clearly $D_1>0$, so $\alpha_1 = D_1/D >0$. 

For $1 < i < s$, using the Laplace expansion along either last or first columns several times,  we obtain:
$$\begin{array}{ll}
 D_i & = \begin{vmatrix}
 &   &  &  & \text{i-th} &   &   &  \\
\delta -1 & 0 & 0 & \cdots & \delta-s+1& \cdots & 0 & 1\\
1 & \delta - 1& 0 & \cdots & 1 & \cdots & 0 & 0\\
0 & 1 & \delta -1 & \cdots &1 & \cdots & 0 & 0\\
\vdots  & \vdots  & \vdots  & \ddots & \vdots & \ddots & \vdots & \vdots \\
0  & 0 & 0  & \cdots & 1 & \cdots & 1  & \delta -1
\end{vmatrix} \\
&=  (-1)^{s-1}  \begin{vmatrix}
 &   &  &  & \text{i-th} &   &     \\
1 & \delta - 1& 0 & \cdots & 1 & \cdots & 0 \\
0 & 1 & \delta -1 & \cdots &1 & \cdots & 0 \\
\vdots  & \vdots  & \vdots  & \ddots & \vdots & \ddots & \vdots \\
0  & 0 & 0  & \cdots & 1 & \cdots & 1  
\end{vmatrix} +
\end{array}$$
$$\begin{array}{ll}
& \hskip 1cm +  (\delta -1)
\begin{vmatrix}
 &   &  &  & \text{i-th} &   &    \\
\delta -1 & 0 & 0 & \cdots & \delta-s+1& \cdots & 0 \\
1 & \delta - 1& 0 & \cdots & 1 & \cdots & 0 \\
0 & 1 & \delta -1 & \cdots &1 & \cdots & 0 \\
\vdots  & \vdots  & \vdots  & \ddots & \vdots & \ddots & \vdots \\
0  & 0 & 0  & \cdots & 1 & \cdots   & \delta -1
\end{vmatrix} \\
& = (-1)^{s-1}B_{(s-i), 1} + (\delta-1)^{s-i} \begin{vmatrix}
 &   &  &  &  & \text{i-th}   \\
\delta -1 & 0 & 0 & \cdots & 0 & \delta-s+1 \\
1 & \delta - 1& 0 & \cdots &0 & 1  \\
0 & 1 & \delta -1 & \cdots& 0 &1 \\
\vdots  & \vdots  & \vdots  & \ddots& \vdots & \vdots \\
0  & 0 & 0  & \cdots & 1 & 1
\end{vmatrix} \\
& = (-1)^{s-1}B_{(s-i), 1} +(\delta-1)^{s-i} (-1)^{i-1} B_{i, \delta -s+1} .
\end{array}$$
To get the last equality we have moved the last column of the determinant in the preceding line  to the first position. So,
$$\begin{array}{ll}
 D_i & = (-1)^i  \frac{\delta - 1}{\delta} [ (\delta - 1)^{s-i-1} + (-1)^{s-i}] + (-1)^{s-1} + \\
& \hskip1cm  \frac{\delta - 1}{\delta} [(\delta -1)^{s-1} + 
 (-1)^{i}(\delta -1)^{s-i}] + (-1)^{i-1}  (\delta - 1)^{s-i} (\delta - s +1).
\end{array}$$
It is clear that $D_i> 0$ for $i\geq 3$. When $i= 2$, since
$$\frac{\delta - 1}{\delta} [(\delta -1)^{s-1} + 
 (\delta -1)^{s-2}] -   (\delta - 1)^{s-2} (\delta - s +1) = (\delta -1)^{s-2}(s-2) >0,$$
 also $D_2 > 0$. Hence $\alpha_i > 0$ for all $1< i < s$.
 Finally, notice that $D_s = (-1)^{s-1}B_{s, \delta - s+1}$ (moving  the last column of $D_s$ to the first position). Hence
 \begin{eqnarray} 
 \nonumber D_s & = & \frac{\delta - 1}{\delta} [ (\delta - 1)^{s-1} + (-1)^s] + (-1)^{s-1}(\delta - s+1) \\
\label{EExRed2} & = &
 \frac{(\delta-1)^s + (-1)^{s-1} (\delta^2 - \delta s + 1) }{\delta } > 0.
 \end{eqnarray} 
Summing up, we have showed that all $\alpha_i > 0$. Since $|\vbf_1| = \cdots = |\vbf_s| = |\vbf| = \delta$, it follows that $\alpha_1 + \cdots + \alpha_s =1$ and $D_1+\cdots + D_s = D$. Hence $\vbf \in \conv(\vbf_1,...,\vbf_s)$ and $(\vbf,a) \in \conv((\vbf_1,0),...,(\vbf_s,0)) + \Rset_+^{s+1}$, which yields  $V(I) = \{ (\vbf_1,0),...,(\vbf_s,0)\}$.
\vskip0.3cm

\noindent {\it Step 2}: To show (i).

Since $I = (J, M)$, by Remark \ref{EG2}, 
\begin{equation}\label{EExRed3}
r(I) = \min \{t > 0|\ M^t \in J^t \} - 1.
\end{equation}
As it is shown above 
$$\frac{D_1}{D}\vbf_1 + \cdots + \frac{D_s}{D}\vbf_s = \vbf,$$
and $D_1+\cdots  D_s = D$. Hence
$$M^D = Y^{Da}( M_1^{D_1} \cdots M_s^{D_s} ) \in J^D.$$
Therefore $r(I) <  D = (\delta -1)^s + (-1)^{s+1}$, whence $r(I) \leq (\delta -1)^s$.

On the other hand, by virtue of (\ref{EExRed2}),  the greatest common divisor of $D$ and $D_s$ is at most
$$ \begin{array}{l}
\gcd((\delta-1)^s + (-1)^{s-1} (\delta^2-\delta s+1), (\delta-1)^s  + (-1)^{s+1}) \\
\hskip0.5cm = \gcd((-1)^{s-1} (\delta^2-\delta s+1) - (-1)^{s+1}  , (\delta-1)^s  + (-1)^{s+1})  \\
\hskip0.5cm \leq \delta^2 - 3 \delta + 2 = (\delta - 1)(\delta -2).
\end{array}$$
Hence $\alpha_s = \frac{n_1}{n_2}$ with $\gcd(n_1,n_2) = 1$, and
$$ \begin{array}{ll} n_2 & = \frac{D}{\gcd(D_s,D)}\geq  \frac{(\delta-1)^s  + (-1)^{s+1}}{(\delta - 1)(\delta - 2)} \geq  \frac{(\delta-1)^s  - 1}{(\delta - 1)(\delta - 2)} \\
& = \frac{(\delta-1)^{s-1}}{\delta -2 } - \frac{1}{(\delta -1)(\delta -2)} = (\delta -1)^{s-2}  + \frac{(\delta-1)^{s-2}}{\delta -2 } - \frac{1}{(\delta -1)(\delta -2)} \\
& \geq  (\delta -1)^{s-2}  + \frac{\delta - 1}{\delta -2 } - \frac{1}{(\delta -1)(\delta -2)} = 
 (\delta - 1)^{s-2} + 1+  \frac{1}{\delta - 1 }.
 \end{array}$$
This implies $n_2 \geq (\delta - 1)^{s-2} + 2$. 

Now let $t>0$ be a number such that $M^t \in J^t$. Then there are  $t_1,...,t_s \in \Nset$ with $t_1+\cdots + t_s = t$ and $\ubf \in \Nset^s$ such that
$t\vbf = t_1\vbf_1 + \cdots + t_s\vbf_s + \ubf$. Since $|\vbf_1| = \cdots = |\vbf_s| = |\vbf| $, it follows that $\ubf = 0$. Then
$$\vbf= \frac{t_1}{t}\vbf_1 + \cdots + \frac{t_s}{t}\vbf_s.$$
Since the system (\ref{EExRed1}) has exactly one solution, we must have $\frac{t_s}{t} = \alpha_s = \frac{n_1}{n_2}$. By the choice of $n_1, n_2$, this implies $t\geq n_2$. Hence, by (\ref{EExRed3}), $r(I) \geq  n_2-1 >  (\delta - 1)^{s-2}$, which yields (i).
\vskip0.3cm

\noindent {\it Step 3}: To show (ii). 

Let  $1\leq t\leq r(I)$ and $n\geq 0$. Assume that $M^tM_1^n \in \mfr I^{t+n}$. Then one can find $t', n_1,...,n_s \in \Nset$ and $m' \neq 1$ a monomial in $K[\Xbf, Y]$ such that $t'+ n_1+ \cdots + n_s = t+n$ and 
$$M^t m_1^n = m' M^{t'} M_1^{n_1}\cdots M_s^{n_s}. $$  Looking at the exponents of $Y$ in both sides, we get $t'\leq t$. 
If $t'=t$, then  from $\deg(M_1) = \cdots = \deg(M_s)$ we must have $m'= 1$, a contradiction. Hence $t'< t$. Dividing both sides by $M^{t'}$ we can assume that $t'= 0$, i.e. 
$M^t m_1^n = m' M_1^{n_1}\cdots M_s^{n_s}$, where $n_1+\cdots + n_s = n+ t$. Then $t\vbf + n\vbf_1 = \ubf + n_1\vbf_1+ \cdots + n_s\vbf_s$ for some $\ubf \in \Nset^s$. Since $|\vbf| = |\vbf_1| = \cdots = |\vbf_s|$, we must have $\ubf =0$. Hence
$$\vbf = \frac{n_1-n}{t}\vbf_1 + \frac{n_2}{t}\vbf_2 + \cdots + \frac{n_s}{t}\vbf_s.$$
This means $( \frac{n_1-n}{t}, \frac{n_2}{t}, ...,\frac{n_s}{t})$ is a solution of (\ref{EExRed1}). Hence $\frac{n_1-n}{t} > 0$, or equivalently $n_1 > n$, and
$t\vbf = ( n_1 -n)\vbf_1+ \cdots + n_s\vbf_s$. This implies $M^t \in J^t$. By virtue of (\ref{EExRed3}) this  contradicts to the assumption $t\leq r(I)$. Hence $M^tM_1^n \not\in \mfr I^{t+n}$, that means $M^tM_1^n$ is a minimal generator of $I^{t+n}$, which yields $d(I^{t+n}) \geq \deg(M^t M_1^n) = \delta n + t a$, or equivalently $\epsilon(I; t+n) \geq ta$ for all $1\leq t\leq r$ and $n\geq 0$. Combining with the bound in Lemma \ref{G1}(ii) this gives the statement (ii) of the proposition.
\end{proof}

Our study of   degree excess functions gives interesting consequences on the Castelnuovo-Mumford regularity. For a finitely generated graded $R$-module $E$, the {\it Castelnuovo-Mumford regularity} of   $E$ is defined by
$$ \reg(E)= \max\{t|\ \text{there is}\ i \ \text{such that}\  H_{\mfr}^i (E)_{t-i} \ne 0\}, $$
where $H_{\mfr}^i (E)$ is the local cohomology module with the support  $\mfr$.  

In general, the Castelnuovo-Mumford regularity $\reg(I)$ of a homogeneous ideal of $R$ could be much larger than $d(I)$. However, it was independently  proved in \cite{CHT} and \cite{Ko} that $reg(I^n) = p(I)n + e(I)$ for some non-negative integer $e(I)$ and all $n\gg 0$. As an immediate consequence of Theorem \ref{Lead}, we obtain:

\begin{cor} \label{Lead3} Let $I$ be a monomial ideal. Then there is a non-negative integer $e(I)$ such that $\reg(I^n) = \delta(I) n +e(I)$ for all $n\gg 0$.
\end{cor}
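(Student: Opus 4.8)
The plan is to deduce this directly from the already-established linearity of $\reg(I^n)$ together with the identification of its leading coefficient proved in Theorem \ref{Lead}(i). By the results of \cite{CHT} and \cite{Ko} quoted just above the statement, for \emph{every} homogeneous ideal there is a non-negative integer $e(I)$ with
\begin{equation*}
\reg(I^n) = p(I)\,n + e(I) \quad \text{for all } n \gg 0,
\end{equation*}
where $p(I)$ is the common leading coefficient appearing in Kodiyalam's construction and in the asymptotic formula for $d(I^n)$ of Lemma \ref{CHTK}. So the only thing that needs to be supplied in the monomial case is the \emph{value} of this leading coefficient.

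The key step, then, is simply to invoke Theorem \ref{Lead}(i), which asserts $p(I) = \delta(I)$ for a monomial ideal $I$, where $\delta(I) = \max\{|\vbf| : \vbf \in V(I)\}$ is the maximal length of a vertex of the Newton polyhedron $NP(I)$. Substituting $p(I) = \delta(I)$ into the displayed formula yields
\begin{equation*}
\reg(I^n) = \delta(I)\,n + e(I) \quad \text{for all } n \gg 0,
\end{equation*}
with the same non-negative integer $e(I)$, which is exactly the claim. There is essentially no obstacle here: all the work is already contained in Theorem \ref{Lead}(i), whose proof (via Lemma \ref{LP} and the Newton polyhedron) establishes that the reduction $J = (\Xbf^{\vbf_1},\dots,\Xbf^{\vbf_{t-q}})$ in the Kodiyalam construction satisfies $d(J) = \delta(I) = p(I)$. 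The corollary is thus a one-line consequence, and the only care needed is to note that $p(I)$ in the regularity statement of \cite{CHT,Ko} is literally the same invariant $p(I)$ occurring in Theorem \ref{Lead}, so that the substitution is legitimate.
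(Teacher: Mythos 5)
Your proposal is correct and is precisely the argument the paper intends: it presents the corollary as ``an immediate consequence of Theorem \ref{Lead},'' relying on the asymptotic formula $\reg(I^n) = p(I)n + e(I)$ of \cite{CHT} and \cite{Ko} and substituting $p(I) = \delta(I)$ from Theorem \ref{Lead}(i). Your added remark that the leading coefficient in the regularity asymptotics is literally the same invariant $p(I)$ from Kodiyalam's construction is exactly the point that makes the substitution legitimate, so nothing is missing.
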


Even in the case of monomial ideals, it is of great interest to give a bound on the index of stability of $\reg(I^n)$ defined as
 \begin{equation}\label{regstD}
 \regst(I) := \min\{t| \reg(I^n) = \delta(I) n +e(I) \ \text{for all}\ n\geq t\}.
\end{equation} 
 However, until now very small progress is achieved. The only existing bound is established for monomial ideals of dimension zero, see \cite[Theorem 3.1]{Ber}. On the other hand, a similar problem for the Castelnuovo-Mumford regularity $\reg(\Inba)$ of integral closures $\Inba$ of $I^n$ is solved in a recent paper \cite{H2}. In \cite{H3}
 we  show that $\delta(I)n \leq \reg(\Inba) \leq \delta(I) n+ \dim R/I$. In particular, $\reg(\Inba) = \delta(I) n + e'(I)$ for some $0\leq e'(I)\leq \dim R/I$ for all $n\gg 0$. Unfortunately we cannot use the technique presented in this paper to give a bound on $e(I)$ of Corollary \ref{Lead3} and on $\regst(I)$. However, our study on  degree excess functions leads to the following somewhat unexpected result, which says that in the worst case, the invariants $e(I)$ and $\regst(I)$ should be at least exponential functions of the number of variables.
 
 \begin{thm} \label{Reg}  Let $I$ be a homogeneous ideal of a polynomial ring $K[X_1,...,X_m]$ of $m\geq 4$ variables. Assume that $d(I)\geq 3$. Then, in the worst case we must have
 \begin{itemize}
\item[(i)] $e (I) > (d(I)-2)^{m-3}$,  and
\item[(ii)] $\regst(I) > \frac{1}{m-1}(d(I) - 2)^{m-4}$.
\end{itemize} 
\end{thm}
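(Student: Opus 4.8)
The plan is to establish both inequalities at once by producing, for each $m\ge 4$, an explicit family of monomial ideals, namely the ideal $I$ of Example \ref{ExRed} (analysed in Proposition \ref{Red}) taken in exactly $m=s+1$ variables, so that $s=m-1$, with reduction parameter $a=1$ and $\delta=\delta(I)=d(I)-1$. Then $d(I)=\delta+a=\delta+1\ge m+1\ge 5$, so the standing hypothesis $d(I)\ge 3$ holds, and the phrase ``in the worst case'' is read as the assertion that such ideals exist. The only input linking the regularity to the material already developed is the elementary inequality $d(M)\le\reg(M)$, valid for every finitely generated graded module $M$ (the top degree of a minimal generator never exceeds the regularity); applied to $M=I^n$ it gives $\reg(I^n)\ge d(I^n)$ for all $n$.

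For part (i) I would fix $n\gg 0$. By Corollary \ref{Lead3} one has $\reg(I^n)=\delta n+e(I)$, while by Theorem \ref{Lead}(i) (or Lemma \ref{CHTK}) $d(I^n)=\delta n+q_0(I)$, where $q_0(I)$ is the stable value of $\epsilon(I;n)$. Hence $\reg(I^n)\ge d(I^n)$ forces $e(I)\ge q_0(I)$. By Proposition \ref{Red}(ii) this stable value is $q_0(I)=r(I)a=r(I)$, and by Proposition \ref{Red}(i), $r(I)>(\delta-1)^{s-2}$. Since $s-2=m-3$ and $\delta-1=d(I)-2$, this yields
$$e(I)\ge q_0(I)=r(I)>(\delta-1)^{s-2}=(d(I)-2)^{m-3},$$
which is exactly (i).

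For part (ii) the plan is to pair the lower bound on $e(I)$ from (i) with a matching upper bound that is linear in $\regst(I)$. Evaluating the eventual identity $\reg(I^n)=\delta n+e(I)$ at $n=\regst(I)$ gives the exact relation $e(I)=\reg(I^{\regst(I)})-\delta\,\regst(I)$, so everything reduces to bounding $\reg(I^{\regst(I)})$ from above. The target estimate is $e(I)\le (m-1)(d(I)-2)\,\regst(I)$; note that $(m-1)(d(I)-2)=s(\delta-1)$ is precisely the factor that lowers the exponent from $m-3$ to $m-4$ and produces the $1/(m-1)$. Granting it and combining with (i),
$$(d(I)-2)^{m-3}<e(I)\le (m-1)(d(I)-2)\,\regst(I),$$
so that $\regst(I)>\tfrac{1}{m-1}(d(I)-2)^{m-4}$, as claimed. (In checking the constant one may use Lemma \ref{G1}(ii), which with $d(I)-\delta=a=1$ gives $d(I^{\regst})=\delta\,\regst+\epsilon(I;\regst)\le d(I)\,\regst$, feeding the degree input into whatever regularity estimate is available at $n=\regst$.)

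The hard part is exactly this upper bound on the regularity. Part (i) needs only the free inequality $\reg(I^n)\ge d(I^n)$, but (ii) requires controlling $\reg(I^{\regst(I)})$ \emph{from above} by something linear in $\regst(I)$ with slope of order $(m-1)(d(I)-2)$, and no universal bound of the shape $\reg(J)\le (m-1)d(J)$ is available for monomial ideals $J$ (already $(X_1^2,X_2^2)$ violates it). So the real work is to prove such a bound for the specific powers $I^{\regst}$ of the Example \ref{ExRed} ideal, most naturally through its reduction $J=(\Xbf^{\vbf_1},\dots,\Xbf^{\vbf_s})$ and the equality $I^n=J^{n-r}I^r$ for $n\ge r=r(I)$. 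An equivalent ``dual'' route, which I expect to be equally delicate, is to bypass the upper bound on $e(I)$ and instead exhibit, for each intermediate power $n$ up to roughly $\tfrac{1}{m-1}(d(I)-2)^{m-4}$, a high-degree socle element of $R/I^n$ (a monomial $\Xbf^{\gamma}Y^{b}\notin I^n$ with $\mfr\cdot\Xbf^{\gamma}Y^{b}\subseteq I^n$) forcing $\reg(I^n)>\delta n+e(I)$ and hence $\regst(I)>n$; the obstruction here is that the trivial lower bound $\reg(I^n)\ge d(I^n)$ is too weak (since $e(I)\ge q_0(I)=r(I)$ already dominates $\epsilon(I;n)=n$ on the relevant range), so one genuinely needs the local cohomology of these monomial ideals. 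Either way, matching the two exponents $m-3$ and $m-4$ so that the bare regularity bound produces precisely the factor $(m-1)(d(I)-2)$ is the crux of the argument.
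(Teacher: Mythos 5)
Your part (i) is correct and coincides with the paper's argument: taking the ideal of Example \ref{ExRed} with $a=1$, $s=m-1$, $\delta=d(I)-1\geq m$, the free inequality $d(I^n)\leq\reg(I^n)$ for $n\gg 0$ forces $e(I)\geq q_0(I)=r(I)a=r(I)>(\delta-1)^{s-2}=(d(I)-2)^{m-3}$ by Proposition \ref{Red}. (Like the paper, this covers only $d(I)\geq m+1$, not all $d(I)\geq 3$; ``in the worst case'' is indeed read as an existence assertion.)

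For part (ii), however, there is a genuine gap: you correctly reduce everything to the estimate $e(I)<s(\delta-1)\,t$ with $t=\regst(I)$, i.e.\ a linear-in-$t$ upper bound on $\reg(I^t)$ with slope of order $s(\delta-1)$, but you leave it unproven, explicitly flagging it as ``the hard part'' and offering only speculative routes (via the reduction identity $I^n=J^{n-r}I^r$, or via socle elements and local cohomology), neither carried out. The paper closes this gap with a known general result you overlooked: for a monomial ideal, the Castelnuovo--Mumford regularity is strictly less than the degree of the least common multiple of its minimal generators (\cite[Theorem 3.1(a)]{BH}, see also \cite[Theorem 3.1]{HTr}; the paper's text says ``greatest common divisor'', but the displayed monomial $F$ is the lcm). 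Since every minimal generator of $I^t$ is a product of $t$ generators of $I$, and each generator of $I$ has $X_i$-exponent at most $\delta-1$ for $i\leq s$ and $X_m$-exponent at most $1$, the lcm of the generators of $I^t$ divides $F=X_1^{(\delta-1)t}\cdots X_s^{(\delta-1)t}X_m^t$, whence $\reg(I^t)<\deg F=s(\delta-1)t+t$ and $e=\reg(I^t)-\delta t<(s-1)(\delta-1)t+t<s(\delta-1)t=(m-1)(d(I)-2)t$. So while you are right that no universal bound of the shape $\reg(J)\leq(m-1)d(J)$ exists, the lcm (Taylor-complex) bound is universal, and for this family the lcm degree is automatically linear in $t$ with exactly the slope you predicted; the conclusion $t>\frac{1}{s}(\delta-1)^{s-3}=\frac{1}{m-1}(d(I)-2)^{m-4}$ then follows from (i) as in your outline. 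The missing ingredient is thus a one-line application of a standard lemma, not a delicate local-cohomology analysis -- but without it your proof of (ii) is incomplete.
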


\begin{proof} Consider the ideal $I$ in Example \ref{ExRed} with $a=1$, $s=m-1$, $\delta \geq m$  and $Y= X_m$.
For short, let $t:= \regst(I)$ and $e:= e(I)$.  Then $\reg(I^t) = \delta t + e$. The greatest common divisor of minimal  generators of $I^t$ is $F= X_1^{(\delta-1) t} \cdots X_s^{(\delta - 1)  t} X_m^t$. By \cite[Theorem 3.1(a)]{BH} (see  also \cite[Theorem 3.1]{HTr}), $\reg(I^t) < \deg(F) = s (\delta -1) t + t$. Hence 
$$e < (s-1) ( \delta - 1) t + t < s(\delta -1)t.$$
On the other hand, since $\delta n + \epsilon (I;n) = d(I^n) \leq \reg(I^n)$ for all $n$, taking $n\gg 0$ we get $\epsilon(I; n) \leq e$. By Proposition \ref{Red}, it implies
$$e\geq  r(I) > ( \delta - 1)^{s-2} = (d(I)-2)^{m-3},$$
whence
$ s( \delta - 1)  t >  e  > (\delta - 1)^{s-2}$. Hence
$$ t > \frac{1}{s}(\delta - 1)^{s-3} = \frac{1}{m -1}(d(I) - 2)^{m-4}.$$
\end{proof}

\noindent {\bf Acknowledgement}.  I would like to thank the referee for careful reading and useful suggestions. In particular, the proof of Lemma \ref{G5} is due to him/her.

 This work is partially supported by NAFOSTED (Vietnam) under the grant number 101.04-2018.307 and the Program for Research Activities of Senior Researchers of VAST under the grant number  NVCC01.11/21-21.


\begin{thebibliography}{99}

\bibitem{AHZ}  R. Abdolmaleki, J.  Herzog and R.  Zaare-Nahandi, {\it On the initial behaviour of the number of generators of powers of monomial ideals}, 	 Bull. Math. Soc. Sci. Math. Roumanie (N.S.) {\bf 63(111)} (2020), 119 - 129. 

 \bibitem{Ber} D.   Berlekamp, {\it Regularity defect stabilization of powers of an ideal},  Math. Res. Lett. {\bf 19} (2012), 109 - 119.
 
 \bibitem{BH} W. Bruns and J. Herzog, {\it  On multigraded resolutions},  Math. Proc. Cambridge Philos. Soc. {\bf 118} (1995), 245 - 257.
 
 \bibitem{C} M. Chardin, {\it Regularity stabilization for the powers of graded M-primary ideals}, Proc. Amer. Math. Soc. {\bf 143} (2015),  3343 - 3349. 
 
\bibitem{CHT} D. Cutkosky, J. Herzog, N. V. Trung, {\it Asymptotic behavior of the Castelnuovo-Mumford regularity}, Compositio  Math. {\bf 118} (1999), 243 - 261.

\bibitem{EU} D. Eisenbud and  B. Ulrich, {\it Notes on regularity stabilization}, Proc. Amer. Math. Soc. {\bf 140} (2012),  1221 - 1232.

\bibitem{HNTT} H. T. Ha, H. D. Nguyen, N. V. Trung and T. N. Trung, {\it Depth functions of powers of homogeneous ideals},  Proc. Ams. Math. Soc. {\bf 149} (2021), 1837 - 1844. 

\bibitem {HSZ} J.  Herzog, N. M. Saem and N.  Zamani, {\it The number of generators of the powers of an ideal}, Internat. J. Algebra Comput. {\bf 29 (2019)}, 827 - 847. 

  \bibitem{H1} L. T. Hoa, {\it Powers of Monomial Ideals and Combinatorics}, In “New Trends in Algebras and Combinatorics”, Proceedings of the 3rd International Congress in Algebras and Combinatorics (ICAC2017), Ed. K P Shum, E. Zelmanov, P. Kolesnikov, and S M Anita Wong, Pages:149 - 178, World Scientific 2020.
 
  \bibitem{H2} L. T. Hoa, {\it Asymptotic behavior of Integer Programming and the stability of the Castelnuovo-Mumford regularity}, Mathematical Programming; DOI: 10.1007/s10107-020-01595-x
  
  \bibitem{H3} L. T. Hoa, {\it Maximal generating degrees of  integral closures of powers  of monomial ideals}, Preprint 2021.
  
 \bibitem{HTr}   L. T. Hoa and N. V. Trung, {\it  On the Castelnuovo-Mumford regularity and the arithmetic degree of monomial ideals}, Math. Z. {\bf 229} (1998),  519 - 537.

\bibitem{HK} A. J. Hoffman and J. B. Kruskal, {\it Integral boundary points of convex polyhedra}. Linear inequalities and related systems, pp. 223 - 246. 
Annals of Mathematics Studies, no. 38. Princeton University Press, Princeton, N. J., 1956.

\bibitem{Ko} V. Kodiyalam, {\it    Asymptotic behaviour of Castelnuovo-Mumford regularity},  Proc. Amer. Math. Soc. {\bf  128} (2000),  407 - 411.
 
\bibitem{Sch} A. Schrijver, Theory of linear and integer programming, John Wiley $\&$ Sons, 2000.

\bibitem{V} W. V. Vasconcelos, {\it  Reduction numbers of ideals}. J. Algebra {\bf 216} (1999),  652 - 664. 

\end{thebibliography}
\end{document}